\newtheorem{theorem}{Theorem}[section]
\newtheorem{definition}[theorem]{Definition}
\newtheorem{proposition}[theorem]{Proposition}
\newtheorem{remark}[theorem]{Remark}
\newtheorem{corollary}[theorem]{Corollary}
\newtheorem{conjecture}[theorem]{Conjecture}
\title{On framings of links in 3-manifolds}
\author{Rhea Palak Bakshi}
\address{Department of Mathematics, The George Washington University, Washington DC, USA}
\email{rhea\_palak@gwu.edu}
\author{Dionne Ibarra}
\address{Department of Mathematics, The George Washington University, Washington DC, USA}
\email{dfkunkel@gwu.edu}
\author{Gabriel Montoya-Vega}
\address{Department of Mathematics, The George Washington University, Washington DC, USA}
\email{gmontoyavega@gwu.edu}
\author{J\'{o}zef H. Przytycki}
\address{Department of Mathematics, The George Washington University, Washington DC, USA and \linebreak Department of Mathematics, University of Gda\'{n}sk, Poland}
\email{przytyck@gwu.edu}
\author{Deborah Weeks}
\address{Department of Mathematics, The George Washington University, Washington DC, USA}
\email{deweeks@gwu.edu}
\keywords{knots, links, $3$-manifolds, framings of links, skein modules, spin structures, Dehn homeomorphisms, incompressible surfaces.}
\subjclass[2010]{Primary: 57M27. Secondary: 57M25}
\begin{document}

\begin{abstract}

We show that the only way of changing the framing of a link by ambient isotopy in an oriented $3$-manifold is when the manifold has a properly embedded non-separating $S^2$. This change of framing is given by the Dirac trick, 
also known as the light bulb trick. The main tool we use is based on McCullough's work on the mapping class groups of $3$-manifolds. We also relate our results to the theory of skein modules.

\end{abstract} 

\maketitle

\tableofcontents

\section{Introduction}

We show that the only way to change the framing of a link in an  oriented $3$-manifold by ambient isotopy 
is when the manifold has a properly embedded non-separating $S^2$. More
precisely the only change of framing is by the light bulb trick as illustrated in Figures \ref{lbl} and \ref{linklbl}. 
Here the change of framing is very local (takes part in $S^2\times [0,1]$ embedded in the manifold) and is related to the fact
that the fundamental group of $SO(3)$ is $\mathbb{Z}_2$. Furthermore, we use the fact that $3$-manifolds possess 
spin structures given by the parallelization of their tangent bundles to show that the framing can only be changed by an even number of full twists. 
We give a short outline of the history of the problem in Subsection \ref{history}.

\begin{figure}[h]
    \centering
    \includegraphics[scale = 0.45]{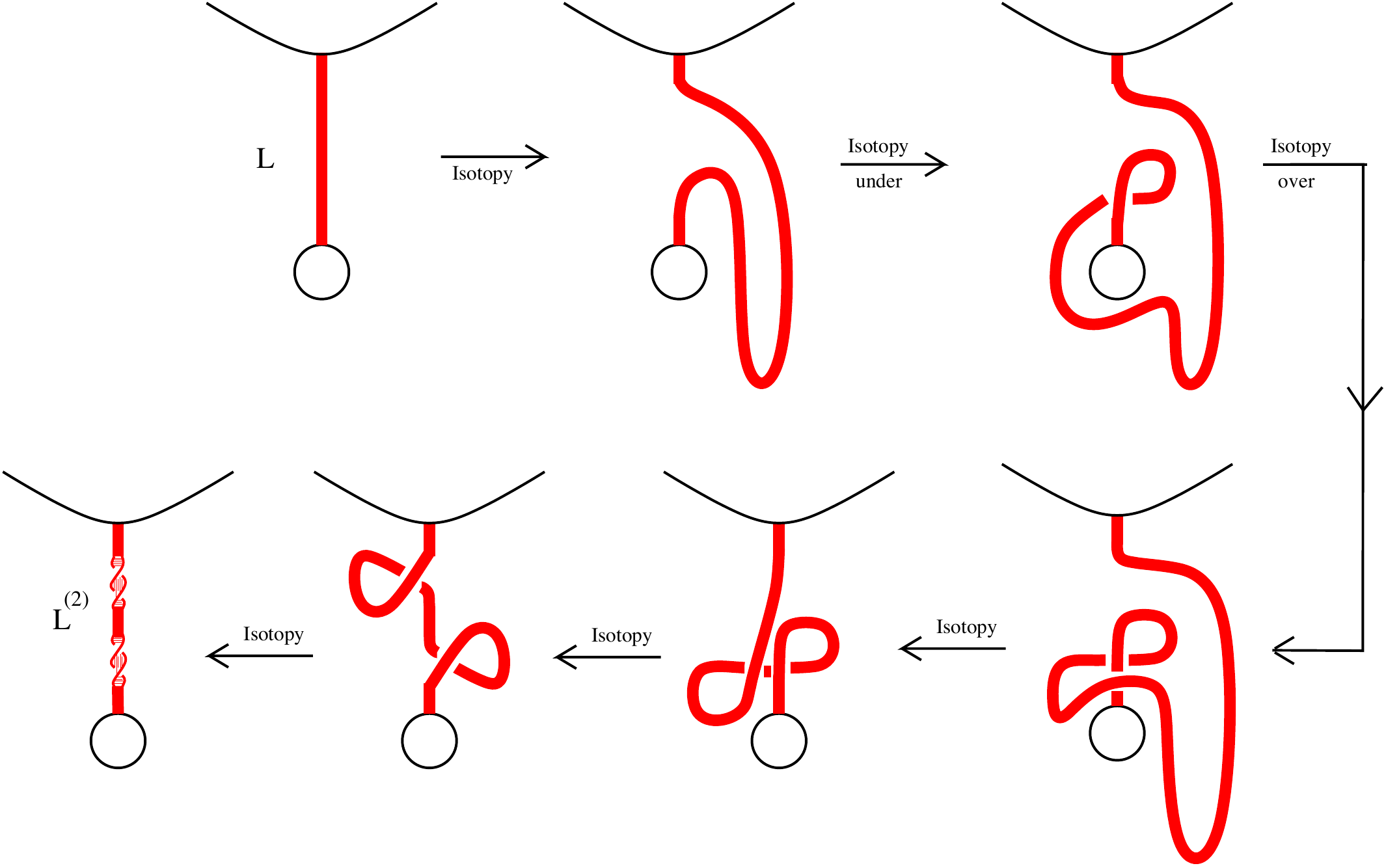}
    \caption{Dirac trick for a knot illustrated using a light bulb}
    \label{lbl}
\end{figure}

In Section 2 we introduce the following preliminary material from $3$-dimensional topology: incompressible surfaces,
mapping class groups of $3$-manifolds, and Dehn homeomorphisms.
In Section 3 we prove our main result (see Theorem \ref{Link}). Furthermore, we extend our main result to non-compact $3$-manifolds. Our main tools are various results of D. McCullough about mapping class groups of $3$-manifolds. The use of the Papakyriakopoulos' Loop theorem \cite{papaloop} and its generalization to the annulus, proved by Waldhausen in \cite {genloop}, is in the background of the proof as it was used by McCullough to prove Theorem \ref{mccullough2006-1}. 
Additionally, in Subsection \ref{spinandframing} we use spin structures on $3$-manifolds to show that the framing of 
a knot or a link cannot be changed by an odd number of full twists. For an introduction to spin structures from the point of view used in this paper, we refer the reader to \cite{ssandspin} and \cite{spinstructure}.

In Section 4 we describe the  ramifications of our results in the context of skein modules, for example, the framing skein module,
$q$-homology skein module, and the Kauffman bracket skein module. In particular, we compute the exact structure of  the framing skein module of links in an oriented $3$-manifold.  

In Section 5 we discuss future directions of research, in particular, questions about incompressible surfaces and 
torsion in various skein modules. Futhermore, we conjecture that an oriented irreducible non-Haken $3$-manifold cannot
have torsion in its Kauffman bracket skein module. We also relate our results to Witten's conjecture on the finiteness of Kauffman bracket skein modules.

\subsection{History of the problem}\label{history}
When the fourth author was visiting Michigan State University  in East Lansing in the Fall of 1989 and was working on the Kauffman bracket skein module
of lens spaces, he thought about the possibility of changing framings of knots in $3$-manifolds by ambient isotopy.
He found an argument that such changes of framing are impossible for irreducible $3$-manifolds. Y. Rong, then a postdoc at MSU, pointed out (see \cite{homotopysm}) that from McCullough's work it follows that if $M$ is a compact $3$-manifold then the framing of a knot in $M$ cannot be changed by ambient isotopy if and only if $M$ has no non-separating $S^2$. The fourth author was then referred to \cite{mapred} and \cite{homeobook} by D. McCullough from which it follows that the only possibility of changing the framing of a knot in $M$ is by the Dirac trick. 
The fourth author stated this result several times (for example, see \cite{qanalogue}) but without giving a detailed proof. Thus, when Vladimir Chernov became interested in the problem he gave his own proof in \cite{framedknots}\footnote{In this paper Chernov wrote, {\it ``This result (for compact manifolds) was first stated by Hoste and Przytycki \cite{homotopysm}. They referred to the work \cite{mapred} of McCullough on mapping class groups of $3$-manifolds for
the idea of the proof of this fact. However to the best of our knowledge the proof
was not given in the literature. The proof we provide is based on the ideas and methods different from the ones Hoste and Przytycki had in mind."}}. When the authors of this paper decided to give details of the old proof they noted that from the new paper \cite{mcgthennow} the proof could be deduced in a relatively simple
way (but still using the generalized Loop theorem for the annulus). Soon after, the authors found another paper by D. McCullough \cite{homeodehnboundary} 
from which the main result follows more directly and this is the proof presented in this paper. After a preprint of our paper
was posted on arXiv, we were kindly informed by Chernov about the paper \cite{CCS} where they prove the result for knots using McCullough's results. Thus, the novelty of this paper is the result for links and, in particular, its relation to skein modules.

\section{Preliminaries}

The following definitions and theorems in low-dimensional topology are useful in the proofs of our results.

\begin{definition}
Let $F$ be a properly embedded surface in a $3$-manifold $M$ or be a part of the boundary of $M$. A disk $D^2$ in $M$ is called a {\pmb compressing disk} of $F$ if $D^2 \cap F = \partial D^2$  and $\partial D^2$ is not contractible in $F$.

An embedded surface $F$ in $M$ that admits a compressing disk is said to be \textbf{compressible}. If the surface $F$ is not a disjoint union of $S^2$'s or $D^2$'s and it contains no compressing disk then the surface is said to be \textbf{incompressible}. If $F = S^2$, then $F$ is incompressible if it does not bound a $3$-ball in $M$.\\

\end{definition}

\begin{definition}
Let $M$ be an orientable manifold and $Homeo(M)$ denote the space of PL orientation preserving
homeomorphisms of $M$. Then the \textbf{mapping class group} of $M$, denoted by $\mathcal{H}(M)$, 
is defined as the space of all ambient isotopy classes of $Homeo(M)$. \\

\end{definition}

\begin{definition}

Let $(F^{n-1} \times I, \partial F^{n-1} \times I) \subset 
(M^n, \partial M^n)$, where $F$ is a connected codimension $1$ submanifold, and $(F \times I) \cap \partial M = \partial F \times I$. Let $\langle \phi_t\rangle$ be an element of $\pi_1(Homeo(F), 1_F )$, that is, for $0 \leq t \leq 1$, $\phi_t$ is a continuous family of homeomorphisms of $F$ such that $\phi_0 = \phi_1 = 1_F$. Define a \textbf{Dehn homeomorphism} as $h \in \mathcal{H}(M)$ by:

\[
  h =
  \begin{cases}
h(x, t) = (\phi_t(x), t) & \text{if $(x, t) \in F \times I$} \\

h(m) = m & \text{if $m \notin F \times I$} \\
 
  \end{cases}
\]

Dehn homeomorphisms are generalizations of Dehn twist homeomorphisms of a surface. 
When $\pi_1(Homeo(F))$ is trivial, a Dehn homeomorphism must be isotopic to the identity. 
Define the \textbf{Dehn subgroup} $\mathcal{D}(M)$ of $\mathcal{H}(M)$ to be the subgroup generated by all Dehn
homeomorphisms.

\end{definition}

\begin{theorem}[Finite Mapping Class Group Theorem]\cite{mcgthennow}\label{finitemcgnonhaken} Let $M$ be a closed oriented irreducible non-Haken $3$-manifold, that is, $M$ is irreducible and contains no properly embedded, incompressible, two-sided surface. Then $\mathcal{H}(M)$ is finite.

\end{theorem}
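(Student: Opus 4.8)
The plan is to reduce the statement to the Thurston--Perelman geometrization theorem together with the known structure of the homeomorphism groups of geometric $3$-manifolds. First I would note that a closed oriented irreducible $3$-manifold admitting a nontrivial torus decomposition contains a two-sided incompressible torus, hence is Haken; similarly, a Seifert fibered space over a base $2$-orbifold with at least four cone points, or with zero Euler number over a Euclidean or hyperbolic base orbifold, contains a two-sided incompressible vertical torus or horizontal surface, and $\mathrm{Sol}$- and flat manifolds are finitely covered by torus bundles and are Haken. Consequently, an $M$ satisfying the hypotheses must fall into one of three cases: (i) $M$ is a spherical space form $S^3/\Gamma$ with $\pi_1(M)$ finite; (ii) $M$ is a closed hyperbolic $3$-manifold; or (iii) $M$ is a ``small'' Seifert fibered space, fibered over a base $2$-orbifold with at most three cone points and nonzero Euler number, carrying $\mathrm{Nil}$ or $\widetilde{\mathrm{SL}_2}$ geometry (the spherical Seifert fibered spaces already being covered by case (i)).

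In case (i) the relevant input is the generalized Smale conjecture for elliptic manifolds---known for lens spaces from work of Bonahon and of Hodgson--Rubinstein, and in the remaining cases from its more recent resolution---which implies that every orientation preserving self-homeomorphism of $M$ is isotopic to an isometry; hence $\mathcal{H}(M)\cong\pi_0(\mathrm{Isom}^+(M))$, and this is finite because $\mathrm{Isom}^+(M)$ is a compact Lie group. In case (ii), Mostow--Prasad rigidity identifies $\mathrm{Out}(\pi_1(M))$ with the finite group $\mathrm{Isom}(M)$, and, by asphericity together with Gabai's proof of the Smale conjecture for hyperbolic $3$-manifolds, $\mathcal{H}(M)\cong\mathrm{Isom}^+(M)$ is again finite. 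In case (iii) one uses, in the background, the Loop theorem and its annulus generalization (exactly the tools invoked in McCullough's work) to isotope an arbitrary self-homeomorphism to a fiber-preserving one; this is possible precisely because the absence of a horizontal incompressible surface and of essential vertical annuli or tori leaves no obstruction. A fiber-preserving homeomorphism induces a homeomorphism of the base $2$-orbifold, which has finite mapping class group---it can only permute cone points of equal order and possibly reverse orientations---and the kernel of the induced map is controlled by the finite set of isotopy classes of fiber-preserving self-isotopies, so $\mathcal{H}(M)$ is finite.

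The main obstacle is not the bookkeeping of the case analysis but the depth of the ingredients it rests on: geometrization itself, and, above all, the step upgrading ``$\pi_0(\mathrm{Isom}(M))$ is finite'' to ``$\mathcal{H}(M)$ is finite'', which for the elliptic and hyperbolic cases is precisely the generalized Smale conjecture and is by far the hardest part. One might hope for a more self-contained argument via a finite Haken cover $\widetilde M \to M$, using Hatcher's and Ivanov's theorems that $\mathrm{Diff}(\widetilde M)$ has contractible components and $\mathcal{H}(\widetilde M)\cong\mathrm{Out}(\pi_1\widetilde M)$; but transferring finiteness back across the cover requires controlling the action on the deck group and lifting and descending isotopies, which is delicate, and in any case the non-Haken hypothesis on $M$ is exactly what blocks a direct application---this is why the geometric classification cannot be avoided.
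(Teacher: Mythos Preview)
The paper does not supply its own proof of this theorem: it is quoted as a preliminary result from Hong--McCullough \cite{mcgthennow}, with no argument given. So there is nothing to compare your proposal against except the literature itself. As a high-level roadmap your outline is correct and is indeed the shape of the proof one finds there: geometrization forces a closed oriented irreducible non-Haken $M$ to be spherical, hyperbolic, or a small Seifert fibered space, and in each case one identifies $\mathcal{H}(M)$ with a finite isometry or fiber-symmetry group. Your honest acknowledgment that the heavy lifting is done by geometrization and by the various forms of the Smale conjecture is exactly right, and case~(ii) is handled in the paper's very next statement (Theorem~\ref{hyperbolicfinmcg}, citing \cite{gmt}).

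There is, however, one point in case~(iii) that is not quite right and is worth flagging. You write that the Loop theorem and its annulus version let one isotope a self-homeomorphism of a small Seifert fibered space to a fiber-preserving one, ``precisely because the absence of a horizontal incompressible surface and of essential vertical annuli or tori leaves no obstruction.'' This is backwards. Those tools are the engine of the \emph{Haken} arguments (Waldhausen's hierarchy method); the non-Haken small Seifert fibered spaces are exactly the cases where that machinery does \emph{not} apply, and the fiber-preserving isotopy theorem for them was established by separate, largely geometric, case-by-case work (Bonahon and Hodgson--Rubinstein for lens spaces, Boileau--Otal and Birman--Rubinstein for the remaining $S^2(\alpha_1,\alpha_2,\alpha_3)$ bases, with some cases only settled much later). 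So your conclusion in case~(iii) is correct, but the mechanism you invoke is not the one that actually proves it; the absence of incompressible surfaces makes this case harder, not easier.
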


This theorem is also true for all hyperbolic $3$-manifolds. 

\begin{theorem}\label{hyperbolicfinmcg}\cite{gmt} The mapping class group of a compact hyperbolic $3$-manifold is finite.\\

\end{theorem}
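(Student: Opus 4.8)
The plan is to deduce the finiteness of $\mathcal{H}(M)$ from Mostow--Prasad rigidity. Recall that a compact hyperbolic $3$-manifold $M$ (closed, or with torus boundary and finite-volume complete hyperbolic interior) is aspherical, and $\pi_1(M)$ is a torsion-free lattice in $\mathrm{Isom}(\mathbb{H}^3)$. Since $M$ is a $K(\pi_1(M),1)$, every self-homeomorphism $f$ is determined up to homotopy by the outer automorphism $f_\# \in \mathrm{Out}(\pi_1(M))$ it induces, and isotopic (indeed homotopic) homeomorphisms induce the same class; hence there is a well-defined homomorphism $\rho\colon \mathcal{H}(M)\to \mathrm{Out}(\pi_1(M))$. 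I would prove the theorem by establishing (a) that $\mathrm{Out}(\pi_1(M))$ is finite, and (b) that $\ker\rho$ is finite --- in fact trivial.

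For (a), Mostow--Prasad rigidity gives that the natural map $\mathrm{Isom}(M)\to \mathrm{Out}(\pi_1(M))$ is an isomorphism, so it suffices to show $\mathrm{Isom}(M)$ is finite. By the Myers--Steenrod theorem $\mathrm{Isom}(M)$ is a compact Lie group, so I only need to rule out a positive-dimensional identity component; a nontrivial connected group of isometries would furnish a nonzero Killing field on $M$, which is impossible on a compact manifold of negative Ricci curvature by Bochner's theorem (the hyperbolic metric has $\mathrm{Ric}\equiv -2$). Thus $\mathrm{Isom}(M)$ is a discrete compact group, i.e.\ finite.

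For (b), $\ker\rho$ consists of isotopy classes of homeomorphisms homotopic to the identity. When $M$ is Haken --- in particular whenever $\partial M\neq\emptyset$, since then the boundary tori are incompressible --- Waldhausen's theorem that homotopic homeomorphisms of Haken manifolds are isotopic shows $\ker\rho$ is trivial. When $M$ is closed and non-Haken, one uses the theorem of Gabai--Meyerhoff--N.~Thurston \cite{gmt}: the space of hyperbolic metrics on $M$ is contractible, which upgrades the inclusion $\mathrm{Isom}(M)\hookrightarrow \mathrm{Homeo}(M)$ to a weak homotopy equivalence, so $\mathcal{H}(M)=\pi_0(\mathrm{Homeo}(M))\cong \pi_0(\mathrm{Isom}(M))=\mathrm{Isom}(M)$, which is finite by (a). I expect step (b) in the closed non-Haken case to be the real obstacle: it is precisely where the deep input \cite{gmt} --- the hyperbolic analogue of the Smale conjecture in dimension three --- is unavoidable, whereas (a) and the Haken case of (b) are classical.
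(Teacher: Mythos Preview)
The paper does not prove this theorem; it is quoted verbatim as a result from the literature with the citation \cite{gmt}, so there is no in-paper argument to compare your proposal against. Your outline is the standard route and is correct in its essentials.

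Two small technical comments. First, the Bochner/Myers--Steenrod argument for finiteness of $\mathrm{Isom}(M)$ is clean only in the closed case; when $M$ has torus boundary the complete hyperbolic metric lives on the non-compact interior, so Myers--Steenrod does not directly yield compactness of the isometry group. One instead observes that $\mathrm{Isom}(\mathrm{int}\,M)$ is identified with the normalizer of the lattice $\pi_1(M)$ in $\mathrm{Isom}(\mathbb{H}^3)$ modulo the lattice itself, and this quotient is finite for any lattice of finite covolume. Second, your description of \cite{gmt} slightly overstates what is proved there: the full weak homotopy equivalence $\mathrm{Isom}(M)\hookrightarrow\mathrm{Homeo}(M)$ is Gabai's subsequent resolution of the Smale conjecture for hyperbolic $3$-manifolds. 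What \cite{gmt} (building on Gabai's earlier work) supplies is exactly the $\pi_0$ statement you need --- a self-homeomorphism of a closed hyperbolic $3$-manifold homotopic to the identity is isotopic to the identity --- and that suffices for your step (b).
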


\begin{theorem}\cite{homeodehnboundary}\label{mccullough2006-1}
Let $M$ be a compact orientable $3$-manifold which admits a homeomorphism which is Dehn twists on the boundary
about the collection $C_1,\hdots,C_n$ of simple closed curves in $\partial M$. Then for each $i$, either $C_i$ bounds a disk
in $M$, or for some $j\neq i$, $C_i$ and $C_j$ cobound an incompressible annulus in $M$. \\

\end{theorem}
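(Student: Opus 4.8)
We want to prove Theorem~\ref{mccullough2006-1}: if a compact orientable $3$-manifold $M$ admits a homeomorphism which restricts on $\partial M$ to Dehn twists about a collection $C_1,\dots,C_n$ of simple closed curves, then each $C_i$ either bounds a disk in $M$ or cobounds an incompressible annulus with some $C_j$, $j\neq i$. Since this is quoted from McCullough's paper \cite{homeodehnboundary}, the proof proposal is really a sketch of McCullough's argument; but let me describe how I would organize it.

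The plan is to argue by contradiction: suppose some $C_i$ neither bounds a disk in $M$ nor cobounds an incompressible annulus in $M$ with any $C_j$. The key idea is to look at the homeomorphism $h$ that realizes these boundary Dehn twists and to track what it does on the level of the characteristic submanifold / incompressible surfaces. First I would reduce to the case where each $C_i$ is essential in $\partial M$: curves bounding disks in $\partial M$ can be pushed off, and curves bounding disks in $M$ are already allowed in the conclusion, so the content is entirely about essential curves that do not bound disks in $M$. Next I would invoke the Loop theorem (Papakyriakopoulos \cite{papaloop}) to see that if $C_i$ is essential in $\partial M$ but not $\pi_1$-injective into $M$ then it bounds a disk in $M$; hence WLOG all remaining $C_i$ are incompressible in $M$. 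The hard analytic input is Waldhausen's generalized Loop theorem / annulus theorem \cite{genloop}, which produces essential annuli: one shows that the Dehn twisting behaviour of $h$ along $C_i$ forces the existence of an essential annulus $A$ in $M$ with $\partial A$ lying in $\partial M$, one component being (isotopic to) $C_i$; then one shows the other component must be one of the $C_j$, and that the annulus can be taken incompressible (i.e.\ essential, not boundary-parallel in a product-collar way that would make $C_i$ bound a disk).

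The main structural step is to understand why a boundary Dehn twist that is \emph{not} "accounted for" by a disk or an essential annulus must be isotopic to the identity on that curve — contradicting that $h$ twists there. Concretely, I would use the interaction between $h$ and the characteristic submanifold $\Sigma(M)$ (Jaco–Shalen–Johannson): the isotopy class of $h$ is controlled by its action on $\Sigma(M)$, and Dehn twisting along a boundary curve $C_i$ can only be realized (up to isotopy) inside an $I$-bundle or Seifert-fibered piece of $\Sigma(M)$ meeting $\partial M$ in an annulus neighbourhood of $C_i$. An $I$-bundle piece meeting the boundary in a neighbourhood of $C_i$ gives a compressing disk for $C_i$ when it is a solid torus attached along a disk, or gives the essential annulus (the other $I$-fibered boundary annulus) whose other boundary component is the required $C_j$; a Seifert piece analysis is similar via vertical annuli. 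This is the step I expect to be the main obstacle, since it requires the full JSJ/characteristic-submanifold machinery together with McCullough's results (used elsewhere in this paper, e.g. the Finite Mapping Class Group Theorem~\ref{finitemcgnonhaken}) identifying which mapping classes act trivially away from the characteristic submanifold.

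Finally, I would assemble the pieces: given the contradiction hypothesis on $C_i$, the characteristic-submanifold analysis yields either a compressing disk (handled) or an essential annulus $A$ with one boundary component parallel to $C_i$; pushing $\partial A$ into $\partial M$ and comparing with the curves actually twisted by $h$ shows the other component of $\partial A$ is isotopic to some $C_j$ with $j\neq i$ (it cannot return to $C_i$ since $M$ is orientable and a one-boundary-component annulus would force $C_i$ to bound); and $A$ is incompressible because a compressing disk for $A$ would again produce a compressing disk for $C_i$. This contradicts the assumption and completes the proof. I would remark that the orientability hypothesis is used to rule out the Möbius-band / one-sided cases and to make the "Dehn twist about an annulus" the only local model, and that compactness is what makes the characteristic submanifold finite and the whole argument effective.
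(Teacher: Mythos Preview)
The paper does not contain a proof of Theorem~\ref{mccullough2006-1}; it is quoted verbatim from McCullough \cite{homeodehnboundary} (note the citation attached to the theorem heading) and used as a black box to derive Corollary~\ref{cortomccullough2006-1} and, ultimately, the main results. There is therefore no ``paper's own proof'' against which to compare your proposal.

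Your sketch is a reasonable high-level outline of the ingredients that go into McCullough's argument --- reduce to essential curves, handle the disk case via the Loop theorem, and produce annuli via the annulus theorem / characteristic submanifold machinery --- and you correctly anticipate that the generalized Loop theorem \cite{genloop} lies in the background (the paper says as much in the Introduction). That said, what you have written is a plan rather than a proof: the ``main structural step'' invoking JSJ pieces is asserted but not carried out, and the claim that the other boundary component of the annulus must be isotopic to some $C_j$ (rather than an arbitrary curve) needs a genuine argument tying the annulus back to the specific twisting data of $h$. If your goal is to reproduce the content of the present paper, the appropriate move is simply to cite \cite{homeodehnboundary}, as the authors do; if your goal is to supply an independent proof, you would need to consult McCullough's paper and fill in the step linking the boundary behaviour of $h$ to the disks and annuli it produces.
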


\begin{corollary}\label{cortomccullough2006-1}

Let $M'$ be a compact oriented $3$-manifold with one of the boundary components of $M'$ being a torus. 
	We denote this torus by $\partial _0 M'$. Let $\widetilde f: M' \longrightarrow M'$ 
	be a homeomorphism which acts nontrivially on $\partial _0 M'$ and is constant on $\partial M' \setminus \partial _0 M'$. 
	Then $\partial _0 M'$ has a compressing disk.
 
\end{corollary}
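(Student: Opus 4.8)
The plan is to obtain the compressing disk directly from McCullough's Theorem~\ref{mccullough2006-1}, applied to $M'$ itself but with a \emph{single} twisting curve, so that the ``cobounding incompressible annulus'' alternative in that theorem becomes vacuous and forces the ``bounds a disk'' alternative.

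First I would bring $\widetilde f$ into standard form on $\partial M'$. Since $\widetilde f$ is the identity on $\partial M'\setminus\partial_0 M'$, no twisting occurs there. On the torus $\partial_0 M'$, the restriction $\widetilde f|_{\partial_0 M'}$ is a (nonzero) power of a Dehn twist $T_C$ about a simple closed curve $C\subset\partial_0 M'$; this is how ``acts nontrivially'' is to be read here, and in the situations where the corollary is invoked it is automatic, because $\widetilde f$ preserves the meridian of $\partial_0 M'$ and hence its restriction lies in the subgroup of the torus mapping class group generated by the meridional Dehn twist. The exponent is nonzero precisely because the action on $\partial_0 M'$ is nontrivial, and $C$ must be essential in $\partial_0 M'$, since a Dehn twist about an inessential curve is isotopic to the identity. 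After an isotopy of $\widetilde f$ supported in a collar $\partial_0 M'\times I$ of $\partial_0 M'$ in $M'$, I may therefore assume that $\widetilde f$ is a homeomorphism of $M'$ which is Dehn twists on $\partial M'$ about the collection consisting of the single essential curve $C$.

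Now I would apply Theorem~\ref{mccullough2006-1} to $M=M'$ with this one-element collection. Its conclusion is that either $C$ bounds a disk in $M'$, or $C$ and some other curve $C_j$ of the collection with $j\neq i$ cobound an incompressible annulus in $M'$. Since the collection consists only of $C$, there is no such $C_j$, so the second alternative cannot occur; hence $C$ bounds a disk $D$ in $M'$, which after a small isotopy we may take to be properly embedded, so that $D\cap\partial M'=\partial D=C$. In particular $D\cap\partial_0 M'=\partial D$, and $\partial D=C$ is essential, hence not contractible, in the torus $\partial_0 M'$. By the definition of a compressing disk, $D$ is then a compressing disk for $\partial_0 M'$, as desired.

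The only genuine obstacle is the normalization in the first step: one must know that the restriction of $\widetilde f$ to $\partial_0 M'$ is, up to isotopy, a power of a Dehn twist about a \emph{single} essential curve rather than some other mapping class of the torus. It is exactly this that reduces the collection of twisting curves to one curve and thereby rules out the incompressible-annulus alternative in McCullough's theorem; once that is in place, the remainder is a formal unwinding of the definitions of Dehn homeomorphism and compressing disk.
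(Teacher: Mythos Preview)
Your argument is correct and follows the same overall line as the paper's proof: both invoke McCullough's Theorem~\ref{mccullough2006-1} after recognizing that $\widetilde f|_{\partial_0 M'}$ is (a power of) a Dehn twist about an essential curve on the torus. The only real difference is in how the annulus alternative is dispatched. You apply the theorem with a \emph{single} twisting curve $C$, so the clause ``for some $j\neq i$'' is vacuous and the disk conclusion is forced immediately. The paper instead realizes $T_C^k$ as a product of Dehn twists about $k$ parallel copies $C_1,\ldots,C_k$ of $C$, so that Theorem~\ref{mccullough2006-1} a priori permits some $C_i$ to cobound an incompressible annulus with another $C_j$; it then rules this out by observing that a Dehn twist along an annulus with both boundary circles on $\partial_0 M'$ acts trivially on $\partial_0 M'$ (the induced twists along the two parallel boundary curves cancel), so the nontrivial action must come from a disk. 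Your route is slightly shorter; the paper's makes the annulus case explicit and does not depend on McCullough's statement accepting a power of a single twist as a one-curve collection. Both proofs share the same unstated hypothesis you correctly flag at the end: ``acts nontrivially'' is to be read as ``restricts to a nontrivial power of a Dehn twist,'' which is precisely what occurs in the applications.
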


\begin{proof}

$\widetilde f|_{\partial _0 M'}$ is generated by Dehn twists on a family $\mathcal{C}$ of $k\ (\neq 0)$ parallel 
	nontrivial simple closed curves on the torus $\partial _0 M'$. Therefore, by Theorem \ref{mccullough2006-1}  these curves bound 
	(compressing) disks or incompressible annuli. However, Dehn twists along an annulus 
	with both boundary components on $\partial _0 M'$ act trivially on $\partial _0 M'$. 
	Therefore, each curve in $\mathcal{C}$ bounds a compressing disk.

\end{proof}

\begin{theorem}\cite{homeodehnboundary}\label{mccullough2006-2}
Let $M$ be a compact orientable $3$-manifold which admits a homeomorphism which is Dehn twists on the boundary
about the collection $C_1,...,C_n$ of simple closed curves in $\partial M$. Then there is a collection of disjoint imbedded disks  and annuli in $M$, each of whose boundary circles is isotopic to one of the $C_i$, for which some  
composition of Dehn twists about these disks and annuli is isotopic to $h$ on $\partial M$. 

\end{theorem}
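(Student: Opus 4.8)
The plan is to turn the dichotomy of Theorem~\ref{mccullough2006-1} into an explicit model for $h$ on $\partial M$. Write the restriction of $h$ to $\partial M$ as a composition $\prod_i \tau_{C_i}^{\,k_i}$ of powers of Dehn twists about the disjoint curves $C_1,\dots,C_n$, where $\tau_C$ denotes the Dehn twist of $\partial M$ about $C$. By Theorem~\ref{mccullough2006-1}, for each index $i$ either $C_i$ bounds a properly embedded disk $D_i\subset M$, or there is some $j\neq i$ for which $C_i$ and $C_j$ cobound a properly embedded incompressible annulus $A_{ij}\subset M$; I would fix one such disk or annulus for every index, thereby splitting $\{1,\dots,n\}$ into \emph{disk indices} and \emph{annulus pairs} (a curve admitting both descriptions may be placed in either class, and the pairing is chosen consistently, so that $i$ is paired with $j$ exactly when $j$ is paired with $i$). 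The next ingredient is the boundary behaviour of the corresponding Dehn homeomorphisms of $M$: if $D\subset M$ is a properly embedded disk with $\partial D=C$, the Dehn homeomorphism supported in a bicollar $D\times I$ built from the generator of $\pi_1(Homeo(D))\cong\mathbb{Z}$ restricts to $\tau_C^{\pm1}$ on the annular neighbourhood $C\times I\subset\partial M$ and to the identity on the rest of $\partial M$; and if $A\subset M$ is a properly embedded annulus with $\partial A=C\sqcup C'$, the Dehn homeomorphism supported in $A\times I$ built from the generator of $\pi_1(Homeo(A))\cong\mathbb{Z}$ restricts on $\partial M$ to $\tau_C^{\,\varepsilon}\tau_{C'}^{\,\varepsilon'}$ with $\varepsilon,\varepsilon'\in\{\pm1\}$ determined by how the coorientation of the two-sided annulus $A$ meets $\partial M$ along its two boundary circles.

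The second step is to make the chosen disks $D_i$ and annuli $A_{ij}$ pairwise disjoint, so that Dehn twists about them commute and their boundary effects simply add. Since the curves $C_i$ are disjoint, all intersections among the chosen surfaces may be taken to be circles, and these are removed by the usual innermost-circle surgeries: an innermost intersection circle on one surface bounds a subdisk there, and it also bounds a subdisk on a disk (always) or, on an incompressible annulus, cannot be essential since then that subdisk would be a compressing disk; in every case a surgery strictly reducing the total number of intersection circles is available, and it produces a new system of properly embedded disks and annuli whose boundary circles are (isotopic to) the $C_i$. Intersections between two annuli require one more cut-and-paste move exploiting their incompressibility, but this is routine. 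After this cleanup we have a pairwise disjoint collection, and for any choice of exponents we obtain a homeomorphism $g$ of $M$, a composition of Dehn twists about these disks and annuli, whose restriction to $\partial M$ is the corresponding product of powers of the $\tau_{C_i}$: along a disk index $i$ the contribution is $\tau_{C_i}^{\pm a_i}$ for the chosen exponent $a_i$, and along an annulus pair $\{i,j\}$ it is $\tau_{C_i}^{\,\varepsilon m}\tau_{C_j}^{\,\varepsilon' m}$ for the chosen exponent $m$.

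It remains to choose the exponents so that $g|_{\partial M}$ is isotopic to $h|_{\partial M}=\prod_i\tau_{C_i}^{\,k_i}$. For disk indices this is immediate, taking $a_i=\pm k_i$. For an annulus pair $\{i,j\}$ it is possible only if $k_j=\pm k_i$, since a single power of the twist about $A_{ij}$ alters the framings along $C_i$ and $C_j$ by amounts of equal absolute value; I expect establishing this compatibility of exponents to be the main obstacle. It should follow from the fact that $h$ is defined on all of $M$, not merely on $\partial M$: cutting $M$ along $A_{ij}$ (or along the frontier of a regular neighbourhood of it), one can track the permutation that $h$ induces on the complementary pieces and on the two sides of $A_{ij}$, or equivalently compare the effect of $h_*$ on the homology class of the core of $A_{ij}$ with its effect on $H_1(\partial M)$; this is the point at which the Loop-theorem and annulus-theorem input underlying Theorem~\ref{mccullough2006-1} genuinely re-enters the argument. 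Once $k_j=\pm k_i$ is known, setting $m=k_i$ with the sign fixed by the coorientation computation of the first step makes $g$ and $h$ agree on $\partial M$ up to isotopy, which is the assertion.
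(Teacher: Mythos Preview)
The paper does not give its own proof of this theorem; it is quoted verbatim from McCullough \cite{homeodehnboundary} and used as a black box. So there is nothing in the paper to compare your argument against, and the relevant benchmark is McCullough's original proof.

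Your proposal tries to deduce Theorem~\ref{mccullough2006-2} from Theorem~\ref{mccullough2006-1}, and you have correctly isolated the point at which this fails: the exponent compatibility for annulus pairs. Knowing only that $C_i$ and $C_j$ cobound an incompressible annulus tells you nothing about the relation between $k_i$ and $k_j$; that relation is a consequence of the global fact that $h$ extends over $M$, and extracting it is essentially the content of the theorem, not a lemma on the way to it. Your suggestion to ``cut along $A_{ij}$ and track homology'' does not suffice in general: the pieces of $M\setminus A_{ij}$ can themselves be complicated, $h$ need not respect the decomposition, and the first homology of $\partial M$ does not see enough to pin down individual Dehn-twist exponents when the $C_i$ are homologous or bound in $M$. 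There is also a structural problem one step earlier: Theorem~\ref{mccullough2006-1} does not produce a partition of the non-disk indices into pairs. It guarantees for each $i$ the existence of \emph{some} $j$ with an annulus to $C_j$, but the resulting relation need not be symmetric or a matching, so your ``disk indices versus annulus pairs'' dichotomy is not well defined from that input alone.

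McCullough's actual argument does not proceed by bootstrapping from Theorem~\ref{mccullough2006-1}. He works inside the characteristic compression body of $M$ and uses a normal form for homeomorphisms there (together with the annulus/JSJ machinery on the incompressible part) to exhibit the required disjoint system of disks and annuli \emph{and} the correct exponents simultaneously. In other words, Theorems~\ref{mccullough2006-1} and~\ref{mccullough2006-2} are siblings derived from a common, deeper structural analysis; the second is not a corollary of the first, and the gap you flag is exactly the gap that this deeper analysis is needed to close.
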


That is, $h$ must arise in the most obvious way, by composition of Dehn twists about a collection of disjoint annuli
and disks  with a homeomorphism that is the identity on the boundary.

\begin{corollary}\label{manytori}
        
Let $M'$ be a compact oriented $3$-manifold with some boundary components, say, \\ $\partial_1(M'),\hdots,\partial_k(M')$ being
tori. Let $\widetilde f:M' \longrightarrow M'$ be a homeomorphism which acts nontrivially on every $\partial_i(M')$ and
 which is the identity on $\partial M' \setminus \bigcup_i\partial_i(M')$. Then either $\partial_i(M')$ has a compressing disk,
say $D_i^2$, or there is some $j\neq i$ such that there is an incompressible annulus $Ann_{i,j}$ with one boundary component on $\partial_i(M')$ and the other on $\partial_j(M')$. Furthermore, one can take these disks and annuli to be disjoint. 
Also,  $\widetilde f$ restricted to $\partial M'$ is ambient isotopic to the composition (of some powers) of the Dehn homeomorphisms along $D_i^2$ and $Ann_{i,j}$.

\end{corollary}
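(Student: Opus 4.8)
The plan is to run the single-torus argument of Corollary~\ref{cortomccullough2006-1} simultaneously over all the distinguished tori, feeding the combined family of twisting curves into Theorems~\ref{mccullough2006-1} and~\ref{mccullough2006-2}. First I would extract the Dehn-twist data on $\partial M'$. Since $\widetilde f$ is the identity on $\partial M'\setminus\bigcup_i\partial_i(M')$ and, on each torus $\partial_i(M')$, restricts to a nontrivial element of the mapping class group of $T^2$, and since that group is generated by Dehn twists, $\widetilde f$ is ``Dehn twists on the boundary'' about a family of simple closed curves lying entirely in $\bigcup_i\partial_i(M')$, with a nonempty subfamily on each $\partial_i(M')$. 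In the case of interest the curves of that subfamily are parallel, so $\widetilde f|_{\partial_i(M')}$ is a nonzero power of a Dehn twist about a single essential curve $C_i\subset\partial_i(M')$; I will phrase the proof with one such $C_i$ per torus, the general case being the same with extra bookkeeping. Observe that $C_i$ is essential in $\partial_i(M')$ because the twist about it is nontrivial.

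Next I would apply Theorem~\ref{mccullough2006-1} to the collection $\{C_1,\dots,C_k\}$: for each $i$, either $C_i$ bounds a disk $D_i^2$ in $M'$ — necessarily a compressing disk of $\partial_i(M')$, since $C_i$ is essential there — or there is some $j\neq i$ for which $C_i$ and $C_j$ cobound an incompressible annulus $Ann_{i,j}$. Because we have arranged exactly one curve per torus, the index inequality $j\neq i$ means $\partial_j(M')\neq\partial_i(M')$, so $Ann_{i,j}$ has one boundary circle on $\partial_i(M')$ and the other on the distinct torus $\partial_j(M')$, as required.

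For disjointness and the final composition statement I would invoke Theorem~\ref{mccullough2006-2}, which replaces the above by a family of \emph{disjoint} embedded disks and annuli in $M'$ whose boundary circles are isotopic in $\partial M'$ to the $C_i$ and some composition of whose Dehn twists equals $\widetilde f$ on $\partial M'$. A compressible annulus in this family has both boundary circles bounding disks and may be traded for two compressing disks, so I may assume the annuli are incompressible. An annulus with both boundary circles on a single $\partial_i(M')$ produces two Dehn twists of opposite sign at its ends and hence acts trivially on $\partial M'$ — this is the cancellation already used in Corollary~\ref{cortomccullough2006-1} — so such annuli can be discarded. After this cleanup every surface left is a compressing disk $D_i^2$ or a connecting annulus $Ann_{i,j}$, the family is still disjoint, and $\widetilde f|_{\partial M'}$ is the stated composition of powers of Dehn homeomorphisms along these $D_i^2$ and $Ann_{i,j}$. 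Finally, since $\widetilde f|_{\partial_i(M')}$ is nontrivial while every discarded surface contributes nothing there, some $D_i^2$ or some $Ann_{i,j}$ must meet $\partial_i(M')$, which yields the dichotomy for that $i$.

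The main obstacle is this last paragraph: carefully separating the ``honest'' disks and connecting annuli from the trivial same-torus annuli, verifying the twist cancellation so that the latter are genuinely negligible on the boundary, and checking that the disjoint family supplied by Theorem~\ref{mccullough2006-2} can be pruned — and its compressible annuli replaced by disks — so as to realize the nontrivial action on every $\partial_i(M')$ at once without reintroducing intersections. The componentwise use of Theorem~\ref{mccullough2006-1} in the second paragraph, where the partner index $j$ ranges over all twisting curves rather than over the tori, is what forces the one-curve-per-torus normalization to be stated cleanly.
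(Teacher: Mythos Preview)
The paper states Corollary~\ref{manytori} without its own proof, treating it as an immediate consequence of McCullough's Theorems~\ref{mccullough2006-1} and~\ref{mccullough2006-2}; your proposal fills in exactly the derivation the authors intend, combining the per-curve dichotomy of Theorem~\ref{mccullough2006-1} with the disjoint realization of Theorem~\ref{mccullough2006-2} and reusing the same-torus-annulus cancellation from the proof of Corollary~\ref{cortomccullough2006-1}. Your one-curve-per-torus normalization is harmless here, since in every application in the paper $\widetilde f|_{\partial_i(M')}$ is a power of a single meridional Dehn twist.
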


For the remainder of the paper we will denote a framed knot (respectively link) by {\emph {\pmb K}} (respectively {\pmb L)} and the underlying unframed knot (respectively link) by $K$ (respectively $L$).

\section{Main Results}

\begin{theorem}\label{Link}

Let $\mathbf L$ be a framed link in a compact oriented $3$-manifold $M$. The only way of changing the framing of \ ${\pmb L}$ by ambient isotopy while preserving the components of $L$ is when $M$  has a properly embedded non-separating $2$-sphere and either:

\begin{enumerate}

  \item  [(i)] $L$ intersects the non-separating $2$-sphere transversely exactly once, or 
\item [(ii)] $L$ intersects the non-separating $2$-sphere transversely in two points, each point belonging to a different component of $L$.

\end{enumerate}

The framing is changed by a composition of even powers of the Dehn homeomorphisms along the disjoint union of $S_j^2$, where $S_j^2$ satisfy conditions (i) or (ii). We illustrate the change of framing by the light bulb trick \footnote{For a nice computer animation of the Dirac trick we refer the reader to \cite{animation}.} (in the regular neighborhood of $S^2$) in Figure \ref{lbl} if there is one point of intersection and in Figure \ref{linklbl} if there are exactly two points of intersection of $L$ with $S^2$.

\end{theorem}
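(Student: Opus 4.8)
The plan is to reduce the link statement to the knot statement (Theorem \ref{mainresult}) by working one component at a time, with the only genuinely new phenomenon being case (ii), where a single rotation about an $S^2$ hitting $L$ in two points (one on each of two components) changes the framings of both components simultaneously by one.

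First I would set up the analog of Theorem \ref{nonseparating} for links. Suppose $f\colon M\to M$ is a homeomorphism with $f|_{\partial M}=\mathrm{Id}$ that changes the framing of $\mathbf{L}$ while preserving components. As in the knot case, choose $f$ so that $f(V_L)=V_L$ for a regular neighborhood $V_L=\bigsqcup_i V_{L_i}$, and since components are preserved we may further arrange $f(V_{L_i})=V_{L_i}$ for each $i$. Set $M'=M\setminus\mathrm{int}\,V_L$ and $\widetilde f=f|_{M'}$. On each torus $\partial V_{L_i}=T_i^2$, the map $f$ fixes the meridian $\mu_i$ and sends $\lambda_i\mapsto\lambda_i+j_i\mu_i$; for at least one $i$ we have $j_i\neq 0$. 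Apply Corollary \ref{manytori} with the tori being exactly those $\partial V_{L_i}$ on which $f$ acts nontrivially (the framing-changing ones). The conclusion gives, for each such $i$, \emph{either} a compressing disk $D_i^2$ for $\partial V_{L_i}$ in $M'$, \emph{or} an incompressible annulus $\mathrm{Ann}_{i,j}$ joining $\partial V_{L_i}$ to some $\partial V_{L_j}$ with $j\neq i$, and these disks and annuli may be taken disjoint. As in the knot proof, the "extension to $f$" constraint forces $\partial D_i^2=\partial D_{\mu_i}^2$ whenever the disk option occurs, so capping off with the meridian disk yields a non-separating $S_{\mu_i}^2$ cut once by $L$; this is exactly case (i).

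Next I would analyze the annulus option. If $\mathrm{Ann}_{i,j}$ has one boundary on $\partial V_{L_i}$ and one on $\partial V_{L_j}$, the requirement that the Dehn twist along $\mathrm{Ann}_{i,j}$ extend over the solid tori $V_{L_i}$ and $V_{L_j}$ forces each boundary circle of the annulus to be a meridian $\mu_i$, resp.\ $\mu_j$ (a Dehn twist along an annulus whose boundary is not a meridian does not extend to a homeomorphism of the solid torus fixing its core setwise with the right framing behavior — more precisely, on a solid torus only meridional twists on the boundary extend). Capping both ends of $\mathrm{Ann}_{i,j}$ with meridian disks $D_{\mu_i}^2\cup_\partial \mathrm{Ann}_{i,j}\cup_\partial D_{\mu_j}^2$ produces a $2$-sphere $S^2_{i,j}$ that meets $L$ transversely in exactly two points, one on $L_i$ and one on $L_j$; it is non-separating because $\mathrm{Ann}_{i,j}\cap(\partial V_{L_i}\cup\partial V_{L_j})=\mu_i\sqcup\mu_j$ does not separate. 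A Dehn homeomorphism $\tau_{i,j}$ along $S^2_{i,j}$ changes the framing of $L_i$ and of $L_j$ each by $\pm 1$. Then, exactly as in the proof of Theorem \ref{mainresult}, invoke $\pi_1(SO_3)=\mathbb{Z}_2$ to see $\tau^2\simeq\mathrm{Id}$ for each such sphere (whether of type (i) or (ii)), so every framing change realizable by $f$ is realized by an \emph{even} power of a composition of the $\tau$'s; and the spin-structure argument of Subsection \ref{spinandframing} rules out that a nontrivial (odd) net framing change could hide inside an isotopy-to-the-identity. Conversely, the light bulb trick in $S^2\times[0,1]$ (Figures \ref{lbl} and \ref{linklbl}) shows each $\tau^2$ genuinely does change framings in the stated way, giving the "only way" both directions.

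The main obstacle I anticipate is the annulus case, specifically showing that an incompressible annulus delivered by Corollary \ref{manytori} must have \emph{meridional} boundary on each $\partial V_{L_i}$, and that no annulus with \emph{both} ends on the same $\partial V_{L_i}$ can occur among the framing-changing tori — the latter because, as noted in the proof of Corollary \ref{cortomccullough2006-1}, a Dehn twist along an annulus with both boundary components on one torus acts trivially on that torus, so it cannot contribute the nontrivial $\lambda_i\mapsto\lambda_i+j_i\mu_i$ action. One must also check the bookkeeping that if component $L_i$ is involved in a type-(ii) sphere with $L_j$ \emph{and} also (say) has its own type-(i) sphere, the net framing changes still compose as claimed; this is linear-algebra over $\mathbb{Z}$ on the framing vector $(j_1,\dots,j_n)$ and should be routine once the geometric pieces are in place. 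Finally, a small point: one must confirm the two intersection points in case (ii) lie on \emph{different} components — this is automatic from the construction since the annulus joins $\partial V_{L_i}$ to $\partial V_{L_j}$ with $i\neq j$, and an $S^2$ hitting a single component twice would (by an innermost-disk/irreducibility argument on each side, or directly from the structure of Corollary \ref{manytori}) not arise as a framing-changing configuration here.
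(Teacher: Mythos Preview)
Your proposal is correct and follows essentially the same route as the paper: remove tubular neighborhoods of the components, apply Corollary~\ref{manytori} to obtain disjoint compressing disks and incompressible annuli, cap these with meridian disks to produce the non-separating spheres of types~(i) and~(ii), and then invoke $\pi_1(SO(3))=\mathbb{Z}_2$ together with the spin-structure parity argument to force even powers. The only refinement worth noting is that the paper records explicitly that $\tau_{i,j}^2$ twists the framings of $K_i$ and $K_j$ in \emph{opposite} directions (not merely ``each by $\pm 1$''), a point used later in Theorem~\ref{mainresultskein}.
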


\begin{figure}[h]
    \centering
    \includegraphics[scale = 0.40]{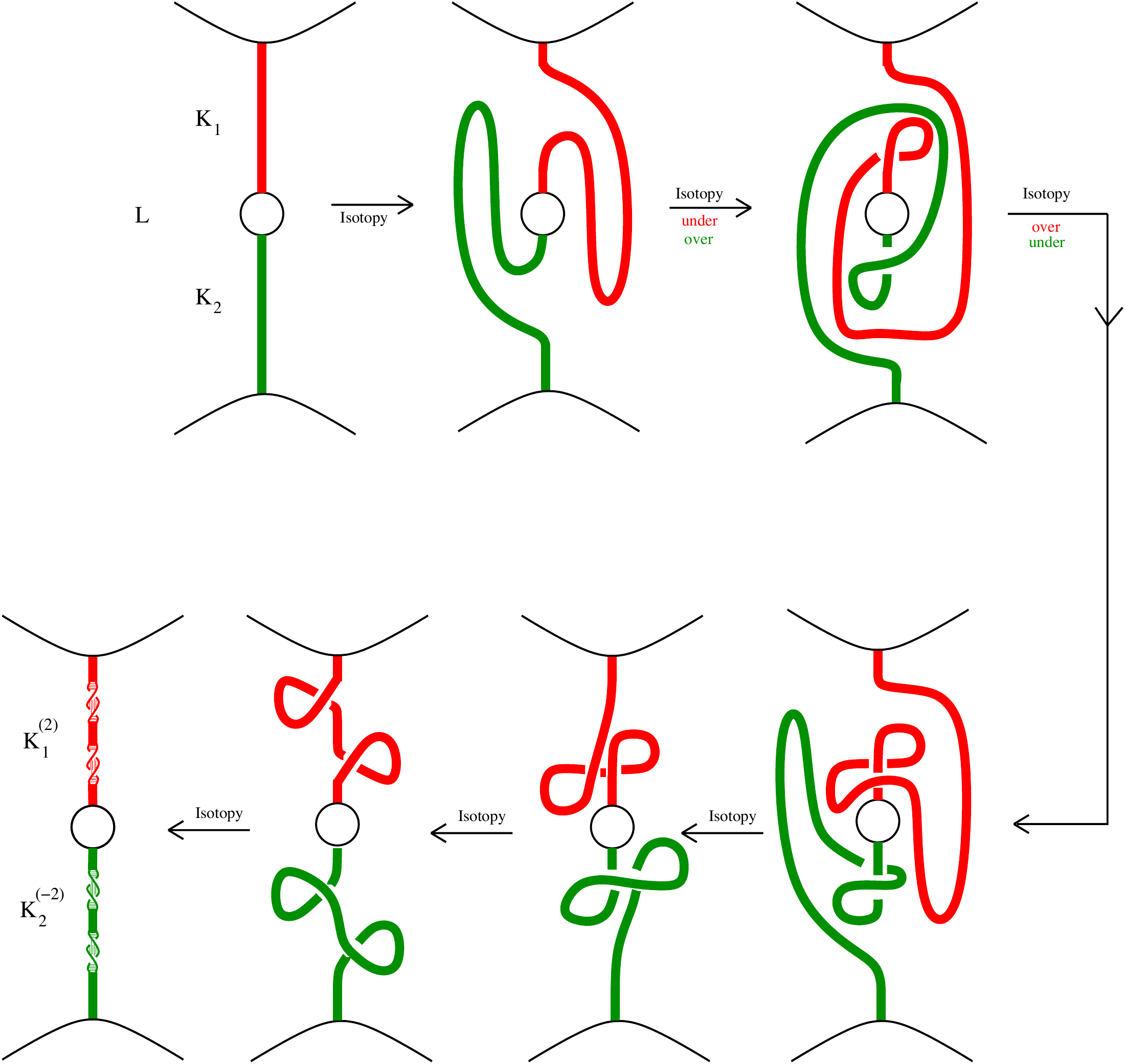}
     \caption{Dirac trick for a link with two components illustrated using a light bulb}
    \label{linklbl}
\end{figure}

The following theorem is the key result used in proving Theorem \ref{Link}  and is of considerable interest in itself.

\begin{theorem}\label{nonseparating}

Let ${\pmb L}$ be a framed link in a compact oriented $3$-manifold $M$. If $f:M \longrightarrow M$ is a homeomorphism 
	which changes the framing of \ ${\pmb L}$ while preserving the components of $L$ and $f|_{\partial M} = Id$, then
there is a non-separating $S^2$ in $M$ and either:

\begin{enumerate}

  \item  [(i)] $L$ intersects the non-separating $2$-sphere transversely exactly once, or 
\item [(ii)] $L$ intersects the non-separating $2$-sphere transversely in two points, each point belonging to a different component of $L$.

\end{enumerate}

The framing is changed by a composition of powers of the Dehn homeomorphisms along the disjoint union of $S_j^2$, where $S_j^2$ satisfy conditions (i) or (ii).

\end{theorem}

\begin{proof}

Let the knots ${\pmb K}_1,{\pmb K}_2,\hdots,{\pmb K}_k$ be the components of the framed link $\mathbf L$, 
 $f:M \longrightarrow M$ be a homeomorphism which changes the framing of $\pmb L$ while preserving the components of $L$, and $f|_{\partial M} = Id$. Without loss of generality, we can assume that $f$ changes the framing of all the components of $\pmb L$. Additionally, we can assume that there are regular neighborhoods $V_{K_i}$ of $K_i$ so that $f(V_{K_i}) =V_{K_i}$ (see \cite{hudsonpl}).
Let $M'= M \setminus \bigcup_i int(V_{K_i})$ and $\widetilde f= f|_{M'}: M' \longrightarrow M'$.
Thus, $\widetilde f$ acts nontrivially on all $\partial V_{K_i}$. Therefore, $\mu_i \mapsto \mu_i$ and $\lambda_i \mapsto \lambda_i + \ell_i\mu_i$, where $\mu_i$ 
is the meridian of $V_{K_i}$ and $\lambda_i$ is a fixed longitude, that is, it is a curve  which intersects $\mu_i$ exactly once. Here $\ell_i \neq 0$ and $f(\pmb K_i) = \pmb K_i^{\ell_i}$. By Corollary \ref{cortomccullough2006-1}, for any $i$ there is a compressing
disk $D_i^2$ of $\partial V_{K_i}$, or there exists $j$ ($i\neq j$) and an incompressible annulus $Ann_{i,j}$ with one boundary component on $\partial V_{K_i}$ and the other on $\partial V_{K_j}$. By Corollary \ref{manytori} we can assume that these compressing disks and incompressible annuli are disjoint. Now the Dehn twist along $\partial D_i^2 $ must be the same as the Dehn twist along $\partial D_{\mu_i}^2$, otherwise $\widetilde f$ cannot be extended to $f$. Therefore, the boundary of the compressing disk $D_i^2$ is the same as the boundary of the meridian disk $D_{\mu_i}^2$ of $V_i$ and $D_i^2\cup_{\partial} D_{\mu_i}^2=S^2_i$ is a non-separating $2$-sphere intersecting $K_i$ transversely in one point. Similarly, $\partial Ann_{i,j}= \partial D_{\mu_i}^2 \sqcup \partial D_{\mu_j}^2$
and $D_{\mu_i}^2 \cup Ann_{i,j} \cup D_{\mu_j}^2 = S_{i,j}^2$ is a non-separating $2$-sphere in $M$ cut by $K_i$ and $K_j$ transversely in one point each. Furthermore, we can assume
that the action of $\widetilde f$ on $\partial V_{K_i}$  is the same as that given by some composition of Dehn homeomorphisms along disjoint compressing disks $D_i^2$ and incompressible annuli $Ann_{i,j}$.

\end{proof}

\begin{remark}

For oriented $3$-manifolds $M$ for which $\mathcal{H}(M')$ is finite, Theorems \ref{Link} and \ref{nonseparating} can be deduced immediately. 
This is because $f|_{\partial V_{K_i}}$ has infinite order in $\mathcal{H}(\partial V_{K_i})$, and thus, $\widetilde {f}$ 
has infinite order in $\mathcal{H}(M')$. Hence, we get a contradiction. This happens, in particular, when $M'$ 
is a hyperbolic manifold or a closed oriented irreducible non-Haken $3$-manifold (see  
	Theorems \ref{finitemcgnonhaken} and \ref{hyperbolicfinmcg}).

\end{remark}

\subsection{Proof of the Main Theorem}

\begin{proof}

We have shown in Theorem \ref{nonseparating} that the only way to change the framing of ${\pmb L}$ is by Dehn homeomorphisms 
along non-separating $2$-spheres which are either intersected by $L$ exactly once or they are intersected by $L$ at two points, each of which belongs to a different component of $L$. Let $\tau_i$ be the Dehn homeomorphism (rotation) about the non-separating sphere $S_i^2$ and $\tau_{i,j}$ the Dehn homeomorphism about the non-separating sphere $S_{i,j}^2$. Now since $\pi_1(SO_3) = \mathbb{Z}_2$, 
	we get that $\tau_i ^2$ and $\tau_{i,j}^2$ are ambient isotopic to the identity map. Also, $\tau_i^2$ and $\tau_{i,j}^2$ along the non-separating $2$-spheres realize the light bulb trick. Furthermore, unlike in Theorem \ref{nonseparating}, the function $f$ which changes 
	the framing of ${\pmb L}$ is ambient isotopic to the identity map. From the existence of spin 
	structures on every oriented $3$-manifold it follows that this function can be considered to be an even 
	power of the rotations $\tau_i$ and $\tau_{i,j}$ along $S_i^2$ and $S_{i,j}^2$, respectively (see Subsection 
	\ref{spinandframing}, \cite{ssandspin}, and \cite{spinstructure}). This completes the proof of Theorem \ref{Link}.\\
	 The fact that $\tau_i^2$ twists the framing of ${\pmb K_i}$ twice by ambient isotopy is illustrated in Figure \ref{lbl}, and that $\tau_{i,j}^2$ simultaneously twists the framing of ${\pmb K_i}$ twice and of ${\pmb K_j}$ twice but in the opposite direction, is illustrated in Figure \ref{linklbl}.\\

\end{proof}

\begin{corollary}\label{noncompact}

Theorem \ref{Link} also holds for non-compact oriented $3$-manifolds.

\end{corollary}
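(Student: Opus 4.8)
The plan is to reduce the non-compact case to the compact case already handled in Theorems \ref{mainresult} and \ref{Link} by a standard compact-exhaustion argument. Suppose $M$ is a non-compact oriented $3$-manifold and $f\colon M\to M$ is an ambient isotopy changing the framing of a framed knot $\mathbf K$ (the link case is identical, carried out component by component). First I would observe that ambient isotopy is supported on a compact set: the isotopy $\{f_t\}_{t\in[0,1]}$ has $f_0=\mathrm{id}$ and $f_1=f$, and since $K$ is compact, $\bigcup_{t}f_t(K)$ is compact, hence contained in the interior of some compact submanifold. The subtlety is that $f$ itself need not be the identity outside a compact set, but we only care about its effect near $K$; so I would instead take the trace of a regular neighborhood $V_K$ under the isotopy, $\bigcup_t f_t(V_K)$, which is again compact.

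Next I would enlarge this compact set to a compact $3$-submanifold $N\subseteq M$ with the following properties: $N$ is a codimension-$0$ submanifold with boundary, $V_K\subseteq \mathrm{int}\,N$, and the entire ambient isotopy carrying $\mathbf K$ to $\mathbf K^{(j)}$ can be taken to be supported inside $N$ (i.e. $f_t$ is the identity outside $N$ for all $t$). Such an $N$ exists because $M$ is a manifold, hence exhaustible by compact submanifolds, and one can cut off the isotopy outside a slightly larger compact set using a collar and the isotopy extension theorem — the framing change only depends on the behavior of the isotopy in an arbitrarily small neighborhood of $K$. Then $f|_N\colon N\to N$ is a homeomorphism of a compact oriented $3$-manifold which changes the framing of $\mathbf K$ and which, after the cutoff, is the identity on $\partial N$. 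Applying Theorem \ref{nonseparating} (in the knot case) or the argument of Theorem \ref{Link} (in the link case) to $N$ yields a non-separating $2$-sphere $S^2\subseteq N$ meeting $K$ transversely in exactly one point (or satisfying (i)/(ii) for links), with the framing change realized by even powers of Dehn homeomorphisms along these spheres.

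Finally I would transport this conclusion back to $M$: the sphere $S^2\subseteq N\subseteq M$ is embedded in $M$, it still meets $K$ transversely in exactly one point (or two, for links), and it is non-separating in $M$ as well — if it separated $M$ it would in particular separate $N$, since the point of $K$ on either side of $S^2$ gives a path in $N$ (indeed in $V_K$) crossing $S^2$ once, contradicting non-separation in $N$; more directly, $K$ itself is a loop in $M$ hitting $S^2$ once, so $[S^2]$ pairs nontrivially with $[K]$ in $H_1$, forcing $[S^2]\ne 0$ in $H_2(M;\mathbb Z_2)$. The conversely-obvious direction (the light bulb trick always changes the framing when such a sphere is present) is purely local in $S^2\times[0,1]$ and needs no compactness. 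I expect the main obstacle to be the second step: making precise that the framing-changing isotopy may be assumed supported in a compact submanifold $N$ with $f|_{\partial N}=\mathrm{id}$, so that Theorem \ref{nonseparating} applies verbatim. This is a routine application of the isotopy extension theorem together with the fact that the framing change is detected in any neighborhood of $K$, but it should be stated carefully; everything after that is a direct appeal to the compact results.
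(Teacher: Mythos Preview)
Your approach is correct and is essentially the same as the paper's: reduce to the compact case by replacing the given ambient isotopy with one having compact support, then apply Theorems \ref{mainresult} and \ref{Link} (via Theorem \ref{nonseparating}) to a compact submanifold. The paper handles your ``main obstacle'' by citing Theorem~6.2 of Hudson \cite{hudsonpl}, which guarantees that an ambient isotopy can be replaced by one supported in finitely many $3$-balls while agreeing with the original on a prescribed compact set---this is the precise statement you want, rather than the isotopy extension theorem per se; your additional check that the resulting $S^2$ remains non-separating in $M$ (witnessed by $K$ crossing it once) is a nice point the paper leaves implicit.
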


\begin{proof}
The ambient isotopy of $M$, which changes the framing of ${\pmb L}$ can be taken to have support on a finite number
of $3$-balls in $M$ \footnote{It follows from Theorem 6.2 in \cite{hudsonpl} that if $C$ is a compact subset of a manifold
$M$ and $F:M\times I \longrightarrow M$ is an ambient isotopy of $M$ then there is another
ambient isotopy $\hat F:M\times I \longrightarrow M$ such that $F_0 ={\hat F}_0$, $F_1\setminus C = {\hat F}_1\setminus C$ and there exists a number $N$ such that the set $\{x\in M \ |\ (\hat F\setminus\{x\}) \times (k/N,(k+1)/N) \ is \ not \ constant\}$ sits in a ball embedded in $M$.}.
Thus, the new ambient isotopy has a compact oriented $3$-manifold as a support and the result follows from Theorem \ref{Link}.

\end{proof}

The following corollary to Theorem \ref{Link} was stated and proven in \cite{homotopysm}, \cite{framedknots}, and \cite{CCS}. 

\begin{corollary}\label{mainresult}

Let ${\pmb K}$ be a framed knot in an oriented $3$-manifold $M$. The only way of changing the framing of \ ${\pmb K}$ by an ambient isotopy of $M$ is when the manifold has a properly embedded non-separating $S^2$ and the underlying knot $K$ intersects this $S^2$ transversely exactly once. 
More precisely, the only change of framing is by the light bulb trick. Equivalently, all possible changes of 
the framing of \ ${\pmb K}$ can be realized by even powers of Dehn homeomorphisms along a non-separating $S^2$ 
which is cut by $K$ exactly once.

\end{corollary}

\begin{remark}\hfill
	\begin{enumerate}
		\item[(a)]
If $M$ is an integral homology sphere (respectively, rational homology sphere), then every knot in $M$ has a preferred 
	framing (respectively, rational framing). As mentioned before, for arbitrary oriented $3$-manifolds we can only define modulo $2$ framing which reflects the affine space of 
	spin structures over $H^1(M, \mathbb{Z}_2)$ (see Subsection \ref{spinandframing}).
\item[(b)] If $M$ can be embedded in a rational homology sphere, then $K$ has a preferred rational framing (depending on the embedding). In particular, the framing cannot be changed by an ambient isotopy of $M$. Notice that this does not apply if $M$ has a properly embedded non-separating closed surface, since then it cannot be properly embedded in any rational homology sphere. \\
\end{enumerate}
\end{remark}

\subsection{Spin structures and framings}\label{spinandframing}

Since spin structures are invariants of $3$-manifolds, the framing of a knot ${\pmb K}$ in $M$ cannot be changed by one full twist using ambient isotopy. We give a short explanation of this fact below.

Let $M$ be a $3$-manifold, $TM$ its tangent bundle and $V: M \longrightarrow TM$ a vector field.
Assume that $V$ is nondegenerate.

\begin{definition}

$M$ is parallelizable if  the tangent bundle of $M$ is trivial, that is, there are three vector 
	fields $V_1$, $V_2$, and $V_3$ which form a basis at every tangent space.

\end{definition}

\begin{theorem}\cite{parallelizable} Every orientable $3$-manifold is parallelizable. 

\end{theorem}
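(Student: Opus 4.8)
The plan is to prove that every orientable $3$-manifold is parallelizable by combining the orientability and the vanishing of obstruction classes in the relevant dimensions, following the classical argument of Stiefel. First I would reduce to the case where $M$ is connected, and recall that the tangent bundle $TM$ is classified by a map $M \longrightarrow BSO(3)$ (using orientability to lift the structure group from $O(3)$ to $SO(3)$). Trivializing $TM$ is equivalent to lifting this map through $ESO(3) \longrightarrow BSO(3)$, and the obstructions to doing so lie in the cohomology groups $H^{i+1}(M; \pi_i(SO(3)))$ for $i = 0, 1, 2$.

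The key steps, in order, are: (1) observe $\pi_0(SO(3)) = 0$, so a frame can be chosen over the $0$-skeleton and extended over the $1$-skeleton with no obstruction; (2) the obstruction to extending over the $2$-skeleton lies in $H^2(M; \pi_1(SO(3))) = H^2(M; \mathbb{Z}_2)$, and this class is precisely the second Stiefel--Whitney class $w_2(M)$, which vanishes because for an orientable $3$-manifold $w_2(M) = 0$ — this follows from Wu's formula, since $w_1(M) = 0$ and $w_2 = v_2 + v_1^2$ with the Wu classes $v_1 = w_1 = 0$ and $v_2$ vanishing in degree $2$ on a $3$-manifold by the defining property $v_2 \cup x = \mathrm{Sq}^2 x$ for $x \in H^1$, together with Poincaré duality considerations; (3) the final obstruction to extending the frame over the $3$-skeleton (all of $M$) lies in $H^3(M; \pi_2(SO(3))) = H^3(M; 0) = 0$ since $\pi_2$ of any Lie group vanishes. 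Hence all obstructions vanish and $TM$ is trivial. I would also remark that for $M$ with nonempty boundary one can argue by doubling or by noting $M$ deformation retracts onto a $2$-complex, making the top obstruction automatically vanish.

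The main obstacle will be step (2): showing cleanly that $w_2(M) = 0$ for an orientable $3$-manifold without invoking heavy machinery. The cleanest route is Wu's theorem together with the observation that on an $n$-manifold the total Wu class $v = 1 + v_1 + v_2 + \cdots$ satisfies $v_i = 0$ for $i > n/2$, so for $n = 3$ we only need $v_1$ and $v_2$; since $v_1 = w_1 = 0$ by orientability, and $v_2$ acts as $\mathrm{Sq}^2$ on $H^1(M;\mathbb{Z}_2)$ which is zero for degree reasons on a $3$-manifold (any $x \in H^1$ has $\mathrm{Sq}^2 x \in H^3$, but $\mathrm{Sq}^2 x = 0$ when the degree of $x$ exceeds... actually $\mathrm{Sq}^2$ on $H^1$ is automatically zero since $\mathrm{Sq}^i x = 0$ for $i > \deg x$), we get $v_2 = 0$ and hence $w_2 = v_2 + \mathrm{Sq}^1 v_1 = 0$. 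Alternatively, and perhaps more in the spirit of the cited reference \cite{parallelizable}, I would give the direct geometric argument: an orientable $3$-manifold admits a spin structure precisely because $w_1 = w_2 = 0$, and a nowhere-zero vector field always exists (Euler characteristic of an odd-dimensional closed manifold is zero), which splits off a trivial line subbundle of $TM$; the complementary $2$-plane bundle is then classified by a map to $BSO(2) = \mathbb{CP}^\infty = K(\mathbb{Z},2)$, i.e.\ by its Euler class $e \in H^2(M;\mathbb{Z})$, and one shows this bundle is trivial over the $2$-skeleton using $w_2 = 0$ and then over all of $M$ since the remaining obstruction lies in $H^3(M;\pi_2(S^1)) = 0$.
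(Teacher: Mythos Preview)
The paper does not give a proof of this theorem at all; it is stated with a citation to Stiefel \cite{parallelizable} and used as background. So there is nothing in the paper to compare your argument against.

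Your primary obstruction-theoretic argument is correct and is the standard modern proof: for an orientable $3$-manifold the obstructions to trivializing $TM$ lie in $H^{i+1}(M;\pi_i(SO(3)))$, the only possibly nonzero one is $w_2\in H^2(M;\mathbb{Z}_2)$, and Wu's formula kills it. Your handling of the boundary/non-compact case via retraction to a $2$-complex is also fine. One small remark: the statement as written in the paper says ``every $3$-manifold,'' but of course parallelizability forces $w_1=0$, so orientability is needed; you were right to assume it.

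Your \emph{alternative} argument, however, has a genuine gap. After splitting off a trivial line bundle you are left with an oriented $2$-plane bundle $\xi$ classified by its Euler class $e(\xi)\in H^2(M;\mathbb{Z})$, and you need $e(\xi)=0$. Knowing $w_2(\xi)=0$ only tells you $e(\xi)$ is even; it does not force it to vanish, and on a $3$-manifold $H^2(M;\mathbb{Z})$ can certainly be nonzero. The clean way to finish along these lines is instead to lift the structure group: since $w_2(M)=0$ the $SO(3)$-bundle lifts to a $Spin(3)=SU(2)=S^3$-bundle, and now all obstructions vanish because $\pi_0=\pi_1=\pi_2$ of $S^3$ are trivial. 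I would either drop the alternative paragraph or replace the Euler-class step with this spin lift.
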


Homotopy classes of parallelizations can be identified with spin structures and spin structures form an affine space over 
the $\mathbb{Z}_2$-linear space $H^1(M, \mathbb{Z}_2)$. The choice of a concrete parallelization makes the affine space of spin structures the linear space $H^1(M,\mathbb{Z}_2)$. In particular to every framed knot ${\pmb K} \subset M$, which represents an element of $H_1(M,\mathbb{Z}_2)$, we associate an element of $\mathbb{Z}_2$. 

Concretely, let $(V_1, V_2, V_3)$ be a fixed orthonormal parallelization of $M$, that is $(V_1, V_2, V_3)$ are orthonormal at every tangent space. 
Let ${\pmb K}$ be a framed knot in $M$ ($|K| \in H_1(M, \mathbb{Z}_2)$). This implies that $|K| \in Z_1(M, \mathbb{Z}_2)$, 
where $Z_1(M) = ker(\partial _1)$. We show that the triple $(V_1, V_2, V_3)$ defines a map from $H_1(M, \mathbb{Z}_2)$ 
to $\mathbb{Z}_2$ and this map is an element of $H^1(M, \mathbb{Z}_2)$. To see this, consider vectors $v_i \in V_i, i = 1, 2, 3$ 
which are incidental at a point $m_a \in M$ and another set of orthonormal vectors $(v_T, v_{fr}, v_{3'}) \in (V_1, V_2, V_3)$ 
which are incidental at a point $m_K \in K \subset M$. $v_T$  denotes the vector which gives the direction of travel  along 
the knot (orientation) and $v_{fr}$ is the vector which gives the framing of the knot. Therefore, there is an an element $g \in SO(3)$ 
which maps $(v_1, v_2, v_3)$ to $(v_T, v_{fr}, v_{3'})$. Now when we travel along the knot we obtain an element of 
$\pi_1(SO(3)) = \mathbb{Z}_2$. Therefore, we obtain a map from $H_1(M, \mathbb{Z}_2)$ to $\mathbb{Z}_2$. 
See \cite{ssandspin} and \cite{spinstructure} for more details.

\section{Ramifications and Connections to Skein Modules}

Our main results can be formulated in the language of skein modules as follows.  

\begin{definition}

Let $M$ be an oriented $3$-manifold and  $\mathcal{L}^{fr}$ the set of unoriented framed links in $M$ up to ambient isotopy. 
	Let $S^{fr}$ be the submodule of the module $\mathbb{Z}[q^{\pm1}]\mathcal{L}^{fr}$ generated by the framing 
	expressions $\pmb{L}^{(1)} - q\pmb{L}$ for any framed link $\pmb{L}$ in $\mathcal{L}^{fr}$. Here $\pmb{L}^{(1)}$ denotes the link obtained 
	from $\pmb{L}$ by twisting the framing of $\pmb{L}$ by a positive full twist (see Figure \ref{fig:1b}). The \textbf{framing skein module} of $M$ is defined 
	as the quotient: $$\mathcal{S}_0(M,q) = \mathbb{Z}[q^{\pm1}]\mathcal{L}^{fr} / S^{fr}. $$ 

\end{definition}
\ \ 

The following theorem for skein modules is equivalent to Corollary \ref{mainresult} as long as we work with knots (see \cite{chapterix}).

\begin{theorem}\label{mainresultskein}

For an oriented $3$-manifold $M$, 
${\mathcal S}_0(M,q)=
\mathbb{Z}[q^{\pm 1}]{\mathcal L}^f \oplus \bigoplus \limits_
{L \in ({\mathcal L^{fr}}\setminus{\mathcal L}^f)} \cfrac{\mathbb{Z}[q]}{q^{2}-1},$
where ${\mathcal L}^f$ is composed of links which do not intersect any
$2$-sphere in $M$ transversely at exactly one point.
\end{theorem}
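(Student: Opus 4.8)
The plan is to translate Theorem~\ref{mainresult} directly into the structure of the quotient module $\mathcal{S}_0(M,q) = \mathbb{Z}[q^{\pm 1}]\mathcal{L}^{fr}/S^{fr}$. First I would partition the generating set $\mathcal{L}^{fr}$ of framed links into equivalence classes under the relation ``same underlying link up to ambient isotopy, differing only in framing.'' For each unframed link $L$, choose a base framing and let the orbit of framings be indexed by the framing change. The point is that the submodule $S^{fr}$ only relates framed links sharing the same underlying unframed link (since $\pmb{L}^{(1)}$ and $\pmb{L}$ have isotopic underlying links), so the quotient splits as a direct sum over unframed isotopy classes, and I can analyze each summand separately.

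Next I would use Theorem~\ref{mainresult} to identify which framed links over a fixed $L$ are actually distinct in $\mathcal{L}^{fr}$. If $L \in \mathcal{L}^f$ — that is, $L$ meets no $2$-sphere in $M$ transversely in exactly one point — then no ambient isotopy changes the framing, so the framings $\pmb{L}^{(k)}$ for $k \in \mathbb{Z}$ are all pairwise non-isotopic; after passing to the quotient by the relations $\pmb{L}^{(k+1)} = q\,\pmb{L}^{(k)}$, this summand becomes a free rank-one $\mathbb{Z}[q^{\pm1}]$-module generated by any chosen representative, contributing the $\mathbb{Z}[q^{\pm1}]\mathcal{L}^f$ term. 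If instead $L \in \mathcal{L}^{fr}\setminus\mathcal{L}^f$, then $L$ meets some non-separating $S^2$ transversely once (on one component), and by Theorem~\ref{mainresult} the light bulb trick identifies $\pmb{L}^{(k)}$ with $\pmb{L}^{(k+2)}$ as framed links. Imposing this identification on top of the skein relations forces $q^2\pmb{L} = \pmb{L}$ in the quotient, so the corresponding summand is $\mathbb{Z}[q^{\pm1}]/(q^2-1)$; since $q$ is invertible mod $q^2-1$, this equals $\mathbb{Z}[q]/(q^2-1)$, giving the second term.

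Assembling these, I get $\mathcal{S}_0(M,q) = \mathbb{Z}[q^{\pm1}]\mathcal{L}^f \oplus \bigoplus_{L \in \mathcal{L}^{fr}\setminus\mathcal{L}^f} \mathbb{Z}[q]/(q^2-1)$, which is the claim. I should also verify, for the case distinction to be exhaustive and the light bulb identification to be the \emph{only} one, that Theorem~\ref{Link} (condition (ii), two intersection points on different components) produces no \emph{additional} collapsing beyond $q^2 = 1$ on each affected component — but since every such move still changes each component's framing by an even number, it is already subsumed by the $q^2 - 1$ relation, so it does not alter the module structure, only potentially which links lie in $\mathcal{L}^{fr}\setminus\mathcal{L}^f$.

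The main obstacle I anticipate is the bookkeeping at the level of generators-and-relations: making precise that $S^{fr}$, though defined using \emph{all} framed links, acts ``diagonally'' with respect to the partition by underlying link, and that the ambient-isotopy identifications coming from Theorem~\ref{mainresult} are compatible with — rather than transverse to — the skein relations. Concretely, one must check that in each non-free summand the relations $\pmb{L}^{(1)} = q\pmb{L}$ and $\pmb{L}^{(2)} = \pmb{L}$ together present exactly $\mathbb{Z}[q^{\pm1}]/(q^2-1)$ and nothing smaller, i.e. that the base framed link $\pmb{L}$ is not itself killed; this is where one genuinely uses that the \emph{even} framing changes are the only isotopies (the spin-structure argument of Subsection~\ref{spinandframing}), ruling out an odd identification that would collapse the summand to zero. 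The reference to \cite{chapterix} presumably handles the precise equivalence, so I would cite it for the formal translation and keep the argument above as the conceptual proof.
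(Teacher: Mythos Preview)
Your approach is essentially the same as the paper's: invoke Theorem~\ref{Link} to classify all ambient-isotopy framing changes, then read off the summand for each underlying link. Your module-theoretic bookkeeping (the diagonal splitting over unframed isotopy classes, and the spin-structure argument ruling out odd identifications) is more explicit than what the paper writes, but the logic is the same.

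One point where your reasoning is less sharp than the paper's is the treatment of case~(ii). You say the two-component light-bulb move is ``subsumed by the $q^2-1$ relation'' because each component's framing changes by an even number. The paper's observation is stronger: the move changes one component by $+2$ and the other by $-2$, so in $\mathcal{S}_0(M,q)$ it contributes $q^{2}q^{-2}=1$ and imposes \emph{no} relation whatsoever. This matters for a link $L$ that admits case~(ii) moves but no case~(i) move: such an $L$ lies in $\mathcal{L}^f$ (it meets the relevant sphere in two points, not one) and its summand must be free, not $\mathbb{Z}[q]/(q^2-1)$. Your hedge that case~(ii) might affect ``which links lie in $\mathcal{L}^{fr}\setminus\mathcal{L}^f$'' is therefore misplaced --- membership in $\mathcal{L}^f$ is purely the geometric one-point-intersection condition, and case~(ii) neither alters it nor introduces torsion.
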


\begin{proof}

In Theorem \ref{Link} we described two possibilities of changing the framing of ${\pmb L}$ by an ambient isotopy. We analyze both cases for the proof.

\begin{enumerate}

    \item If $L$ intersects a non-separating sphere $S^2_i$ transversely in exactly one point, say by component $K_i$, then $\tau_i^2$ is ambient isotopic to the identity and twists the framing
        of ${\pmb K_i}$ by two full twists, and thus also the framing of ${\pmb L}$. Therefore, in the framing skein module $(q^2-1){\pmb L}=0$. We notice that
        $\mathbb{Z}[q^{\pm 1}]\mathcal{L}^{fr}$ divided by this relation exactly gives $\mathbb{Z}[q^{\pm 1}]{\mathcal L}^f \oplus \bigoplus \limits_
{L \in ({\mathcal L^{fr}}\setminus{\mathcal L}^f)} \frac{\mathbb{Z}[q]}{q^{2}-1}$. \\ 
        
    \item If exactly two components $K_i$ and $K_j$ of the link $L$ intersect a non-separating $2$-sphere
        $S^2_{i,j}$ in one point each, then $\tau_{i,j}^2$ changes the framing of ${\pmb K_i}$ by $2$ and of ${\pmb K_j}$ by $-2$ (as illustrated in Figure \ref{linklbl}).  Thus, even though this Dehn homeomorphism changes the framing of ${\pmb L}$, it is invisible in the framing skein module
        which does not see which component is twisted. Therefore, the twists cancel algebraically in ${\mathcal S}_0(M,q)$.
    
\end{enumerate}

\end{proof}

\begin{corollary}\label{corframingsm}
There exists an epimorphism $\omega$ from the framing skein module to the $\mathbb{Z}[q]/(q^2-1)$-group ring 
	over the first homology with $\mathbb{Z}_2$ coefficients, that is
	$$\omega: \mathcal{S}_0(M,q) \longrightarrow \bigg(\frac{\mathbb{Z}[q]}{q^2-1}\bigg)H_1(M,\mathbb{Z}_2),$$  
which is not canonical and depends on the choice of spin structure (in the form of a parallelization of a tangent bundle to $M$). The choice of parallelization gives
a map $b:H_1(M,\mathbb{Z}_2) \longrightarrow \mathbb{Z}_2$. Hence, $\omega(K)= |K|$ if $b(|K|)=0$ and $\omega(K)= q|K|$ if $b(|K|)=1$.
Here ${\pmb K}$ is a framed knot in $M$ and $|K|$ denotes the homology class of $K$ in $H_1(M, \mathbb{Z}_2)$. See Subsection \ref{spinandframing}.

\end{corollary}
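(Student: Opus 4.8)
The plan is to construct the map $\omega$ explicitly using the spin structure machinery of Subsection \ref{spinandframing} and then verify it is well-defined on the quotient $\mathcal{S}_0(M,q)$, additive under disjoint union of links, and surjective. First I would fix an orthonormal parallelization $(V_1,V_2,V_3)$ of $M$ (which exists by the theorem of \cite{parallelizable}), yielding the cocycle $b:H_1(M,\mathbb{Z}_2)\longrightarrow\mathbb{Z}_2$ described in the subsection: for a framed knot ${\bf K}$, travelling once around $K$ and recording the loop in $\pi_1(SO(3))=\mathbb{Z}_2$ obtained by comparing the moving frame $(v_T,v_{fr},v_{3'})$ against the fixed frame gives $b(|K|)\in\mathbb{Z}_2$. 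Define $\omega$ on generators by $\omega({\bf L}) = q^{\,b(|L|)}\,|L|$, where $|L|\in H_1(M,\mathbb{Z}_2)$ is the total homology class and $b(|L|)=\sum_i b(|K_i|)$ over the components $K_i$ of $L$, then extend $\mathbb{Z}[q^{\pm1}]$-linearly to $\mathbb{Z}[q^{\pm1}]\mathcal{L}^{fr}$. Note that $q$ acts invertibly on the target since $q\cdot q = q^2 = 1$ there, so the $\mathbb{Z}[q^{\pm1}]$-module structure descends to a $\mathbb{Z}[q]/(q^2-1)$-module structure.

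Next I would check that $\omega$ kills the submodule $S^{fr}$, i.e. that $\omega({\bf L}^{(1)}) = q\,\omega({\bf L})$ for every framed link ${\bf L}$. The point is that ${\bf L}^{(1)}$ has the same underlying unframed link as ${\bf L}$, hence the same homology class $|L|$, but the framing vector $v_{fr}$ has been rotated by one extra full twist around the component being retwisted; this adds the nontrivial element of $\pi_1(SO(3))$ to the frame comparison, so $b(|{\bf L}^{(1)}|) = b(|L|) + 1 \pmod 2$. Therefore $\omega({\bf L}^{(1)}) = q^{\,b(|L|)+1}|L| = q\cdot q^{\,b(|L|)}|L| = q\,\omega({\bf L})$, exactly the framing relation. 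Consequently $\omega$ descends to a well-defined $\mathbb{Z}[q]/(q^2-1)$-module homomorphism $\mathcal{S}_0(M,q)\longrightarrow \big(\mathbb{Z}[q]/(q^2-1)\big)H_1(M,\mathbb{Z}_2)$. For surjectivity, observe that every class $h\in H_1(M,\mathbb{Z}_2)$ is represented by some embedded (framed) knot ${\bf K}_h$ in $M$, and $\omega({\bf K}_h) = q^{\,b(h)}h$; since $q$ is a unit in $\mathbb{Z}[q]/(q^2-1)$, the element $h$ itself is $q^{\,b(h)}\omega({\bf K}_h)$, so all the free $\mathbb{Z}[q]/(q^2-1)$-module generators $h$ are hit, and $\omega$ is an epimorphism. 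The final sentence of the corollary is just the special case of the formula for a single framed knot: $\omega(K)=|K|$ when $b(|K|)=0$ and $\omega(K)=q|K|$ when $b(|K|)=1$.

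I expect the main obstacle to be the well-definedness check in the middle step, specifically making rigorous the claim that $b$ is genuinely a homomorphism on $H_1(M,\mathbb{Z}_2)$ (not merely a function on knots) and that the full-twist framing change contributes precisely the generator of $\pi_1(SO(3))$. This is where one must invoke carefully the identification of homotopy classes of parallelizations with spin structures and the affine structure over $H^1(M,\mathbb{Z}_2)$ from Subsection \ref{spinandframing}: one needs that $b$ factors through bordism/homology so that $b(|L|)$ depends only on $|L|\in H_1(M,\mathbb{Z}_2)$, and that a knot bounding mod $2$ (so representing $0$ in $H_1$) receives $b=0$ regardless of its framing-change history — which is consistent precisely because any such framing change is realized by an even power of a Dehn rotation $\tau$ along a non-separating $S^2$ (Theorem \ref{mainresult}), and $\tau^2$ is isotopic to the identity. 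Everything else (linearity, the $q^2=1$ relation, surjectivity) is then formal.
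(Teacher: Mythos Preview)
The paper gives no explicit proof of this corollary; it is stated as a consequence of the spin-structure discussion in Subsection~\ref{spinandframing} (and implicitly of Theorem~\ref{mainresultskein}). Your first two paragraphs carry out exactly the construction the paper has in mind, and that part is correct.

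Your third paragraph, however, contains a genuine confusion that would derail you if you tried to fill it in. The invariant $b$ is \emph{not} a function on $H_1(M,\mathbb{Z}_2)$ and does \emph{not} factor through homology: it is a function on ambient-isotopy classes of \emph{framed} knots, and you yourself showed in paragraph two that twisting the framing once flips $b$ by $1$. The paper's notation ``$b(|K|)$'' is loose and should be read as $b({\bf K})$. In particular, the sentence ``a knot bounding mod $2$ \dots\ receives $b=0$ regardless of its framing-change history'' is false (already the unknot in $S^3$ with $0$-framing versus $1$-framing shows this), and invoking Theorem~\ref{mainresult} here is a red herring: that theorem concerns framing changes \emph{achieved by ambient isotopy}, whereas the relation ${\bf L}^{(1)}-q{\bf L}$ compares two genuinely different framed links. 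All you actually need for well-definedness is (i) $b$ is an ambient-isotopy invariant of framed knots, which is immediate since the comparison of the moving frame to the background parallelization is homotopy-invariant, and (ii) $b({\bf K}^{(1)})=b({\bf K})+1$, which you already checked. Drop the homology-factoring requirement and the appeal to Theorem~\ref{mainresult}, and your argument is complete.
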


\subsection{From the Kauffman bracket skein module to spin twisted homology}

\begin{definition}\cite{smof3} Let $M$ be an oriented $3$-manifold, $\mathcal{L}^{fr}$ the set of unoriented framed links (including 
the empty link $\varnothing$) in $M$ up to ambient isotopy, $R$ a commutative ring with unity, and $A$ 
a fixed invertible element in $R$.
In addition, let $R\mathcal{L}^{fr}$ be the free $R$-module generated by $\mathcal{L}^{fr}$ and $S_{2, \infty}^{sub}$ the 
	submodule of $R\mathcal{L}^{fr}$ generated by all (local) skein expressions of the form:

\begin{itemize}
    \item [(i)]$\pmb{L}_+ - A\pmb{L}_0 - A^{-1}\pmb{L}_{\infty}$, and
    \item [(ii)] $\pmb{L} \sqcup \pmb{\bigcirc}  + (A^2 + A^{-2})\pmb{L}$,
\end{itemize}

where $\pmb{\bigcirc}$ denotes the trivial framed knot and the {\it skein triple} $\pmb L_+$, $\pmb L_0$ and $\pmb L_{\infty}$ denote three 
	framed links in $M$ which are identical except in a small $3$-ball in $M$ where they differ as shown below.

\[  \begin{minipage}{1.3in} \includegraphics[width=\textwidth]{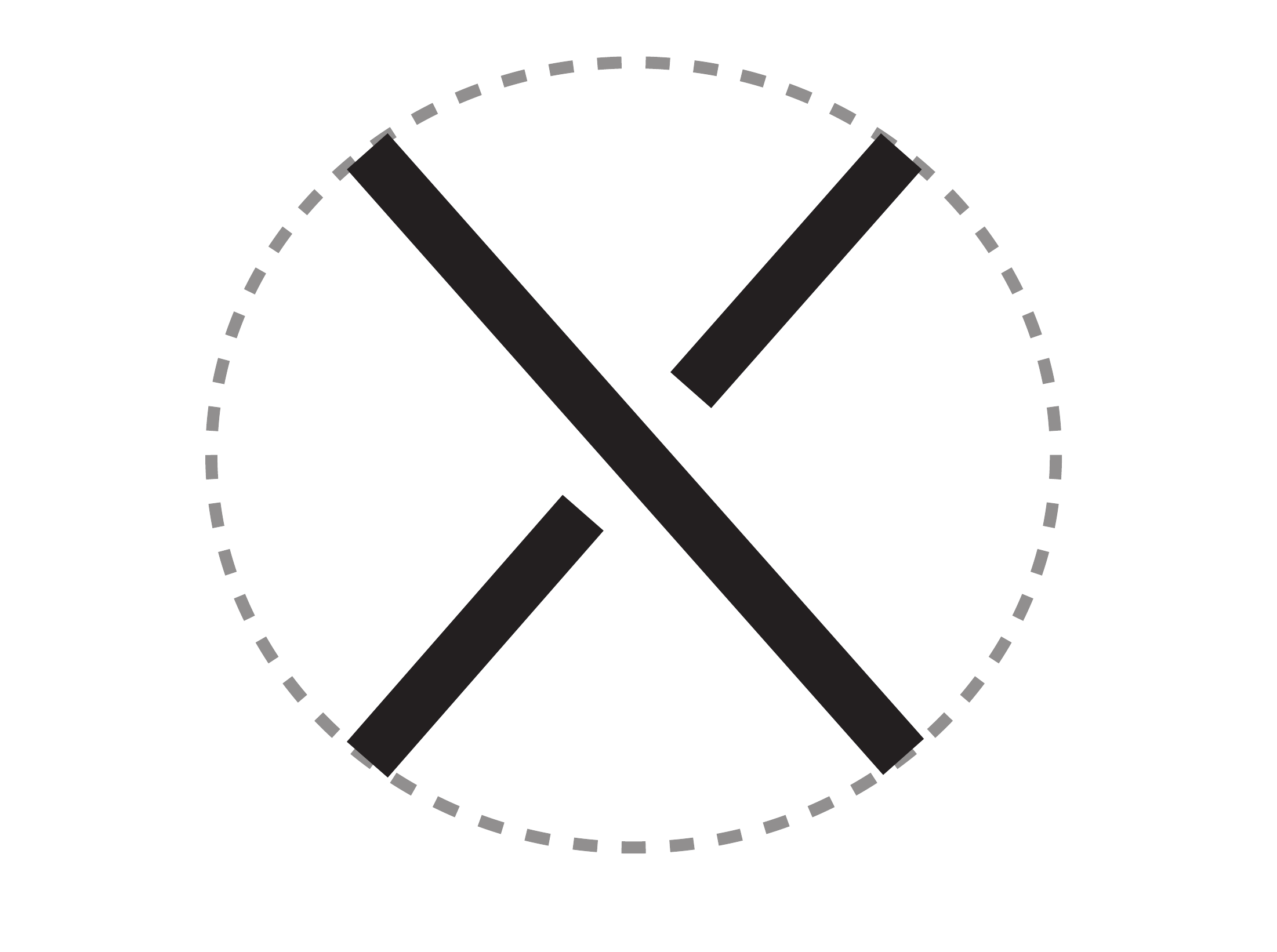} \vspace{-15pt} \[\pmb L_+\] \end{minipage} 
               \qquad
        \begin{minipage}{1.3in}\includegraphics[width=\textwidth]{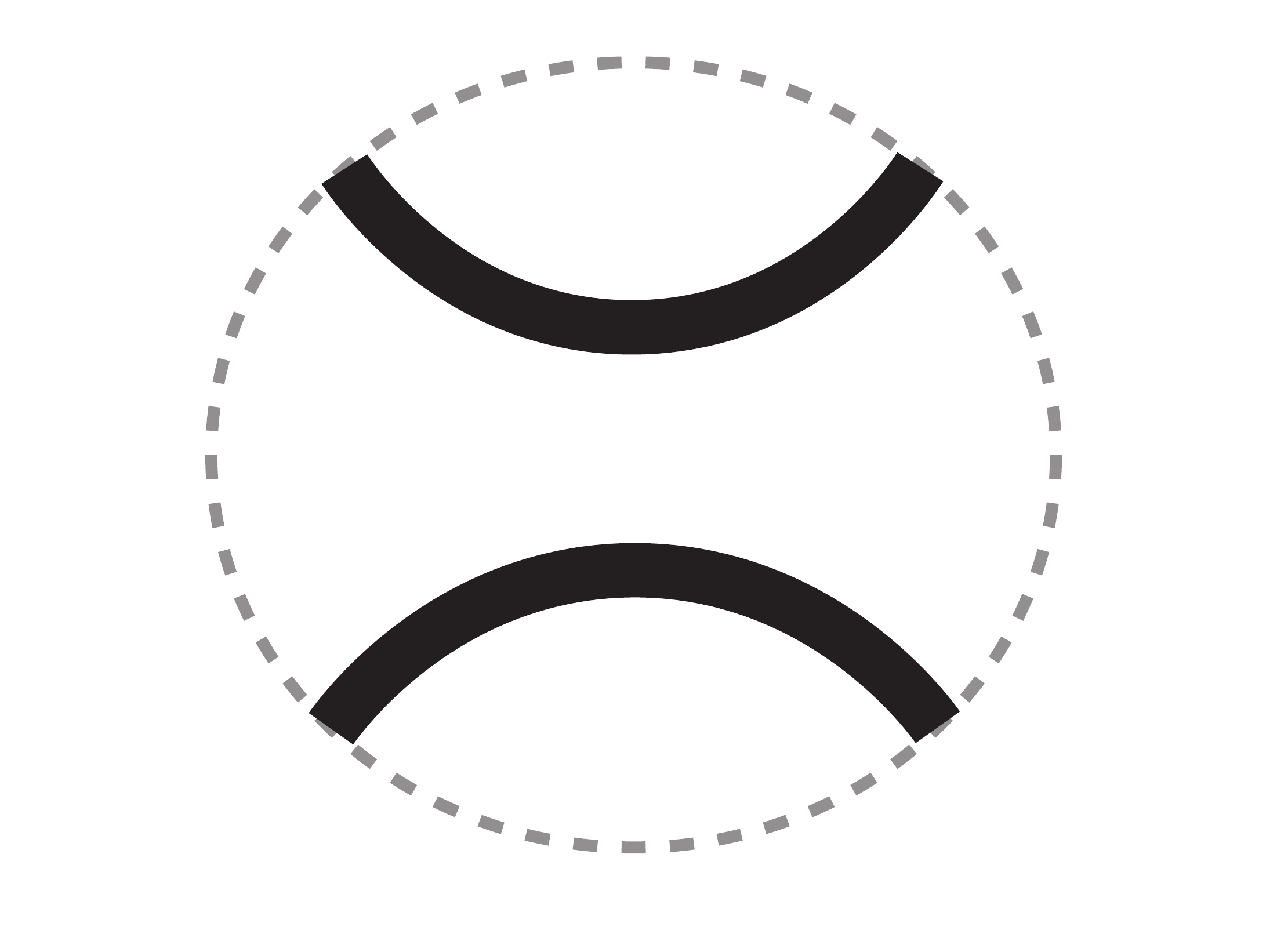} \vspace{-15pt} \[\pmb L_0\] \end{minipage}
         \qquad
        \begin{minipage}{1.3in}\includegraphics[width=\textwidth]{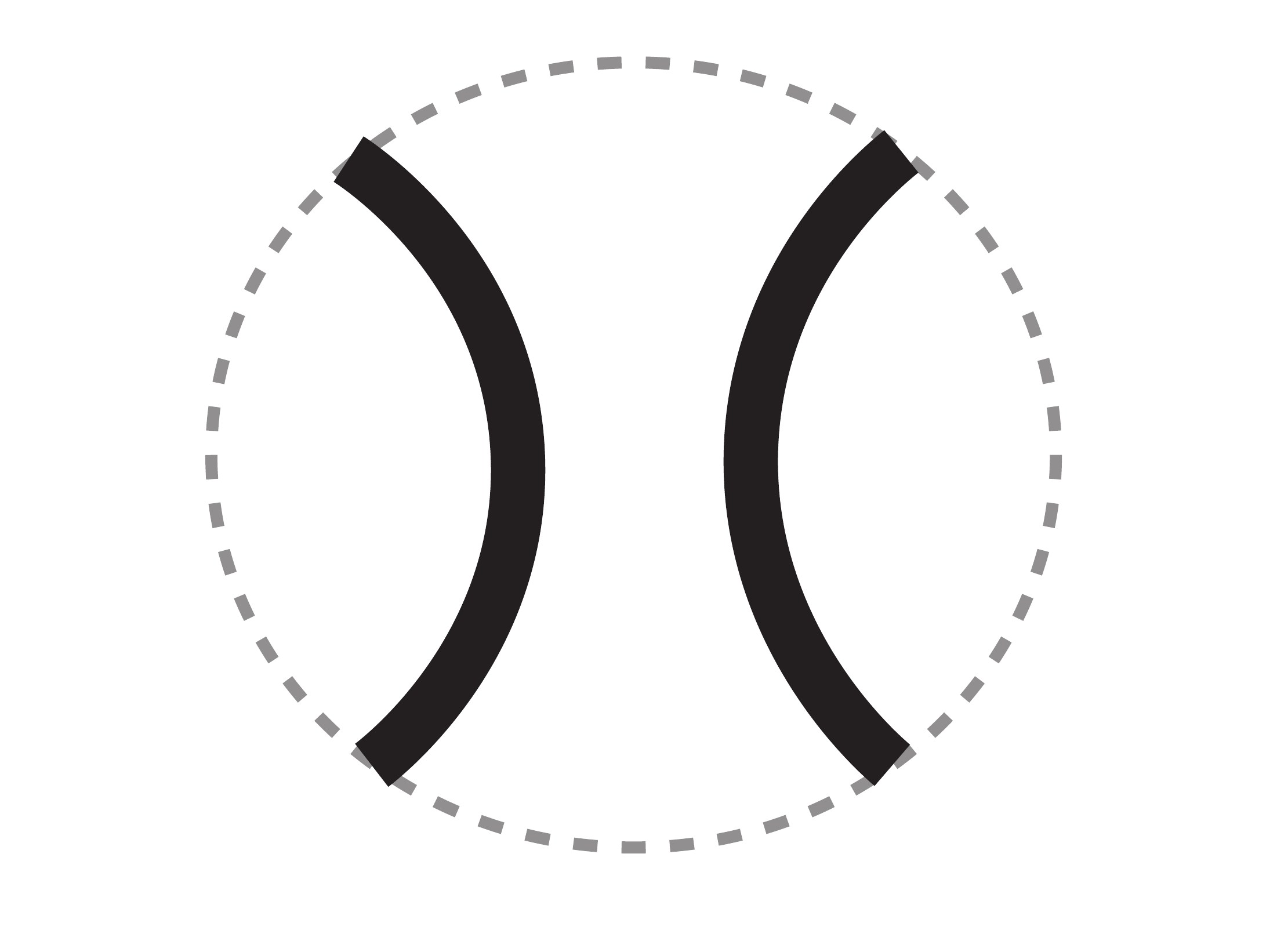} \vspace{-15pt} \[ \pmb L_\infty\]\end{minipage} 
        \]

The \textbf{Kauffman bracket skein module} (KBSM) of $M$ is defined as the quotient: 
	$$\mathcal{S}_{2,\infty}(M;R,A) = R\mathcal{L}^{fr}/S_{2, \infty}^{sub}.$$

\end{definition}

Notice that $\pmb{L}^{(1)} = -A^3\pmb{L}$ in $\mathcal{S}_{2,\infty}(M;R,A)$. This relation is called the {\it framing relation}. 
For simplicity we will use the notation $\mathcal{S}_{2,\infty}(M)$ when $R = \mathbb{Z}[A^{\pm 1}]$.\\ 

\begin{proposition}

There exists an epimorphism
$$\phi : \mathcal{S}_{2,\infty}(M) \longrightarrow \bigg(\frac{\mathbb{Z}[A]}{A^4+A^2+1}\bigg) H_1(M, \mathbb{Z}_2)$$  
	which is not canonical and depends on the choice
of spin structure on $M$ (here in the form of a parallelization of a tangent bundle to $M$). The codomain of $\phi$ is called \textbf{spin twisted homology}. Compare with Corollary \ref{corframingsm} and \cite{ssandspin,ps1}. 

\end{proposition}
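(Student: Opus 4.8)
The plan is to construct the map $\phi$ explicitly on generators and then verify that it kills the submodule $S_{2,\infty}^{sub}$, exactly paralleling the construction of $\omega$ in Corollary \ref{corframingsm} but with the scalar ring replaced. First I would fix a parallelization of $TM$, which by the discussion in Subsection \ref{spinandframing} produces a homomorphism $b : H_1(M,\mathbb{Z}_2) \longrightarrow \mathbb{Z}_2$, i.e. a class in $H^1(M,\mathbb{Z}_2)$. Given a framed link $\pmb L = \pmb K_1 \cup \cdots \cup \pmb K_n$, set $\phi(\pmb L) = A^{\,\epsilon(\pmb L)}\,|L|$, where $|L| = \sum_i |K_i| \in H_1(M,\mathbb{Z}_2)$ and $\epsilon(\pmb L) \in \mathbb{Z}$ is determined (mod the relations below) by the framing of $\pmb L$ relative to the spin structure, component by component, with the convention that adding a positive full twist on a component multiplies by $-A^3$ (the framing relation), consistent with the $\pmb L^{(1)} = -A^3 \pmb L$ identity noted just above. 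The empty link goes to $1 \in H_1(M,\mathbb{Z}_2)$ (the trivial element), and $\phi(\pmb\bigcirc) = -A^2 - A^{-2}$ so that relation (ii) is respected.

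Next I would check well-definedness on $R\mathcal{L}^{fr}/S_{2,\infty}^{sub}$. For relation (ii), $\phi(\pmb L \sqcup \pmb\bigcirc) + (A^2+A^{-2})\phi(\pmb L) = \phi(\pmb L)\cdot\phi(\pmb\bigcirc) + (A^2+A^{-2})\phi(\pmb L) = \phi(\pmb L)\big((-A^2-A^{-2}) + (A^2+A^{-2})\big) = 0$, using that disjoint union maps to the product (sum in $H_1$, product of the $A$-powers). For the Kauffman skein triple (i), the three links $\pmb L_+, \pmb L_0, \pmb L_\infty$ agree outside a ball; in $H_1(M,\mathbb{Z}_2)$ one has $|L_+| = |L_\infty|$ while $|L_0|$ may differ, but the crucial point is that modulo $2$ the homology classes of $L_0$ and $L_\infty$ coincide as well when the two strands belong to the same component, and when they belong to different components the smoothing $\pmb L_0$ merges them — in all cases $|L_+| = |L_0| = |L_\infty|$ in $H_1(M,\mathbb{Z}_2)$ because the local modification is supported in a ball and hence homologically trivial mod $2$. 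Then $\phi(\pmb L_+) - A\phi(\pmb L_0) - A^{-1}\phi(\pmb L_\infty)$ reduces, after factoring out the common class in $H_1(M,\mathbb{Z}_2)$, to a relation among powers of $A$ that is forced to vanish in $\mathbb{Z}[A]/(A^4+A^2+1)$; one checks this by recording how the framing/writhe contributions $\epsilon$ differ among the triple (a positive crossing versus its two smoothings) and observing that the resulting element is exactly $A^{\pm 3}$ times $(A^4+A^2+1)$ up to sign after using $-A^3 \leftrightarrow$ positive twist. This is the computation that pins down why the target ring must be $\mathbb{Z}[A]/(A^4+A^2+1)$: it is the universal quotient in which $-A^3$ acts as the framing change is forced to act once the skein relation collapses links with the same mod-$2$ homology.

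Finally I would argue surjectivity: every element $h \in H_1(M,\mathbb{Z}_2)$ is represented by some framed knot $\pmb K$ (any embedded loop in that class, with an arbitrary framing), and $\phi(\pmb K) = A^{\epsilon} h$ with $A^{\epsilon}$ a unit in $\mathbb{Z}[A]/(A^4+A^2+1)$, so $h$ is in the image; since such elements generate the group ring over $\mathbb{Z}[A]/(A^4+A^2+1)$ and $\phi$ is $\mathbb{Z}[A^{\pm1}]$-linear (with $A$ acting invertibly on both sides), $\phi$ is onto. I expect the main obstacle to be the bookkeeping in step two: making precise the claim that $\epsilon(\pmb L)$ is well-defined modulo the framing relation independently of how one isotopes $\pmb L$ — this is exactly where Theorem \ref{mainresult} (equivalently Theorem \ref{mainresultskein}) and the spin-structure argument of Subsection \ref{spinandframing} enter, guaranteeing that the only ambiguity in the framing of each component is by even full twists, which is invisible after we pass to $\mathbb{Z}[A]/(A^4+A^2+1)$ since $(-A^3)^2 = A^6 \equiv 1$. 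Getting the signs and the identification $(-A^3)^2 \equiv 1 \pmod{A^4+A^2+1}$ to line up cleanly with the geometric light-bulb move is the delicate part; everything else is a routine check against relations (i) and (ii).
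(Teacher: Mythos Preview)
Your proposal is correct and follows essentially the same strategy as the paper: define $\widetilde\phi$ on framed links via the spin structure (parallelization) and the $H_1(M,\mathbb{Z}_2)$-class, then check that the two Kauffman bracket skein expressions lie in the kernel. The paper's proof is considerably terser than yours; its entire content for relation (i) is the single identity $\widetilde\phi(\pmb L_+)=-A^{-3}\widetilde\phi(\pmb L_0)=-A^{-3}\widetilde\phi(\pmb L_\infty)$, from which $\widetilde\phi(\pmb L_+)-A\widetilde\phi(\pmb L_0)-A^{-1}\widetilde\phi(\pmb L_\infty)=-A^{-3}(1+A^4+A^2)\widetilde\phi(\pmb L_0)=0$ follows immediately, and for relation (ii) it simply notes that the trivial circle forces $(A^4+A^2+1)=0$ in the target. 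Your longer discussion of why $\epsilon(\pmb L)$ is isotopy-invariant and your appeal to Theorem \ref{mainresult} are not needed here: the $\mathbb{Z}_2$-valued spin invariant of Subsection \ref{spinandframing} is already well defined on ambient-isotopy classes of framed knots, so there is nothing further to check before passing to the quotient.
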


\begin{proof}

Using the parallelization of the tangent bundle of $M$, we have a map \\ $\widetilde{\phi} : \mathbb{Z}[A^{\pm 1}]\mathcal L^{fr} \longrightarrow  \bigg(\frac{\mathbb{Z}[A^{\pm1}]}{A^4+A^2+1}\bigg) H_1(M, \mathbb{Z}_2)$. We check that  $\widetilde{\phi}$ is zero on the Kauffman bracket skein expressions:

\begin{enumerate}

\item Since $\widetilde{\phi}(\pmb{L}_+) = -A^{-3}\widetilde{\phi}(\pmb{L}_0)= -A^{-3}\widetilde{\phi}(\pmb{L}_{\infty})$, then $\widetilde{\phi}(\pmb{L}_+ - A\pmb{L}_0 - A^{-1}\pmb{L}_{\infty}) = 0$. 

\item Since $\pmb{L} \sqcup \pmb{\bigcirc}  = (-A^2 - A^{-2})\pmb{L}$ in $\mathcal{S}_{2,\infty}(M)$, then $(A^4 + A^2 +1)\pmb{L} = 0$ in $H_1(M, \mathbb{Z}_2)$.

\end{enumerate}

\end{proof}

\subsection{The $q$-homology skein module}

\begin{definition}

Let $M$ be an oriented $3$-manifold, $\mathcal{L}^{\overrightarrow{fr}}$ denote the set of ambient isotopy classes of oriented
framed links in $M$ and let $R = \mathbb{Z}[q^{\pm1}]$.
Let us denote by $S_2^{\overrightarrow{fr}}$ the submodule of $R\mathcal{L}^{\overrightarrow{fr}}$ generated by the skein expressions $\pmb{L}_+ - q\pmb{L}_0$ and $\pmb{L}^{(1)} - q\pmb{L}$,
illustrated in Figure \ref{qhomologyfigure}. 

The quotient $R\mathcal{L}^{\overrightarrow{fr}}/S_2^{\overrightarrow{fr}}$ is called the $\pmb q$\textbf{-homology skein module}, also known as the second skein module, and it is denoted
by $\mathcal{S}_2(M, q)$.

\end{definition}

\begin{figure}[h]
\centering
\begin{subfigure}{.49\textwidth}
\centering
\begin{minipage}{1.3in}\includegraphics[width=\textwidth]{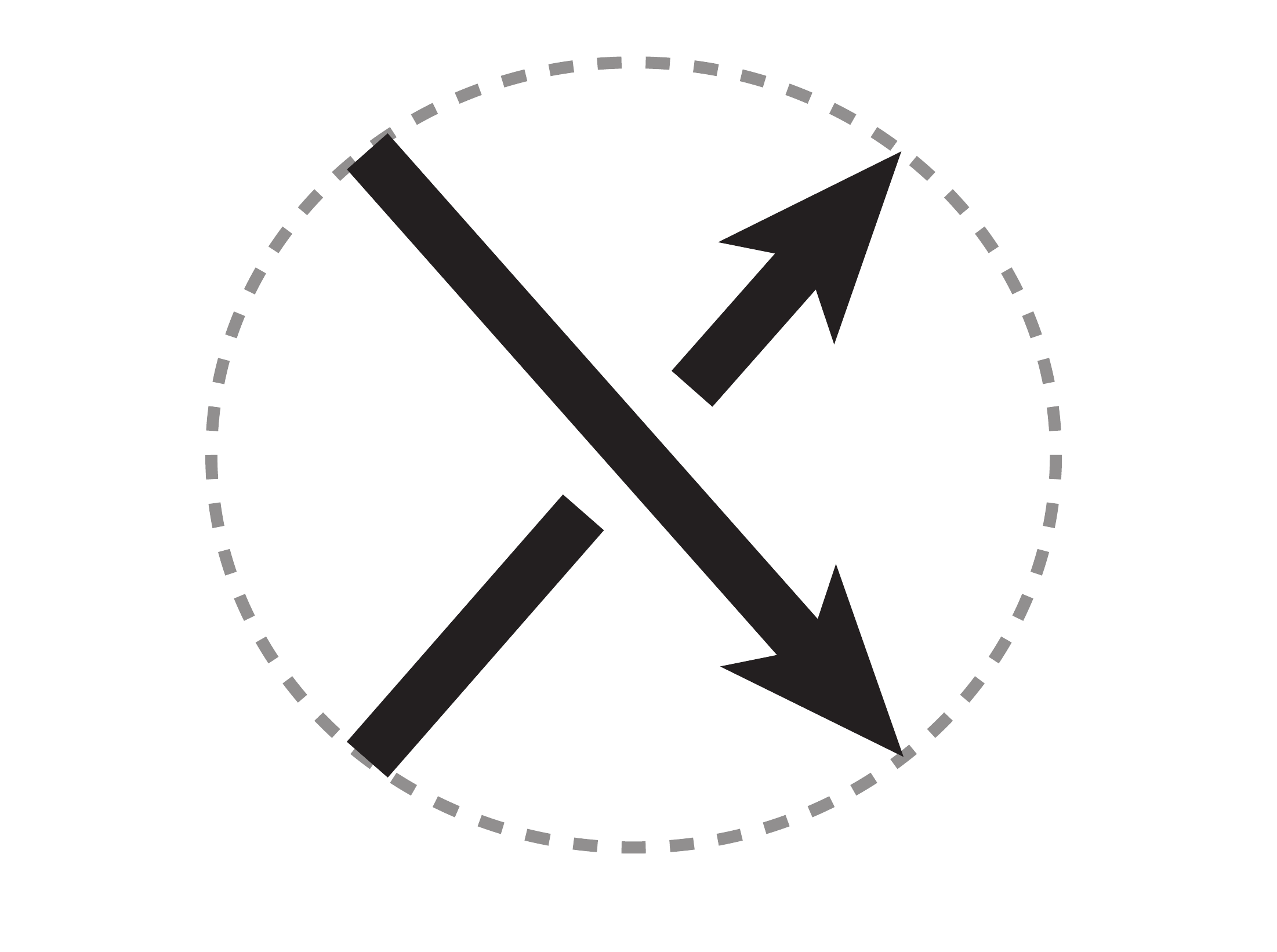}\vspace{-10pt} \[\pmb L_+\]\end{minipage}
- q \begin{minipage}{1.3in}\includegraphics[width=\textwidth]{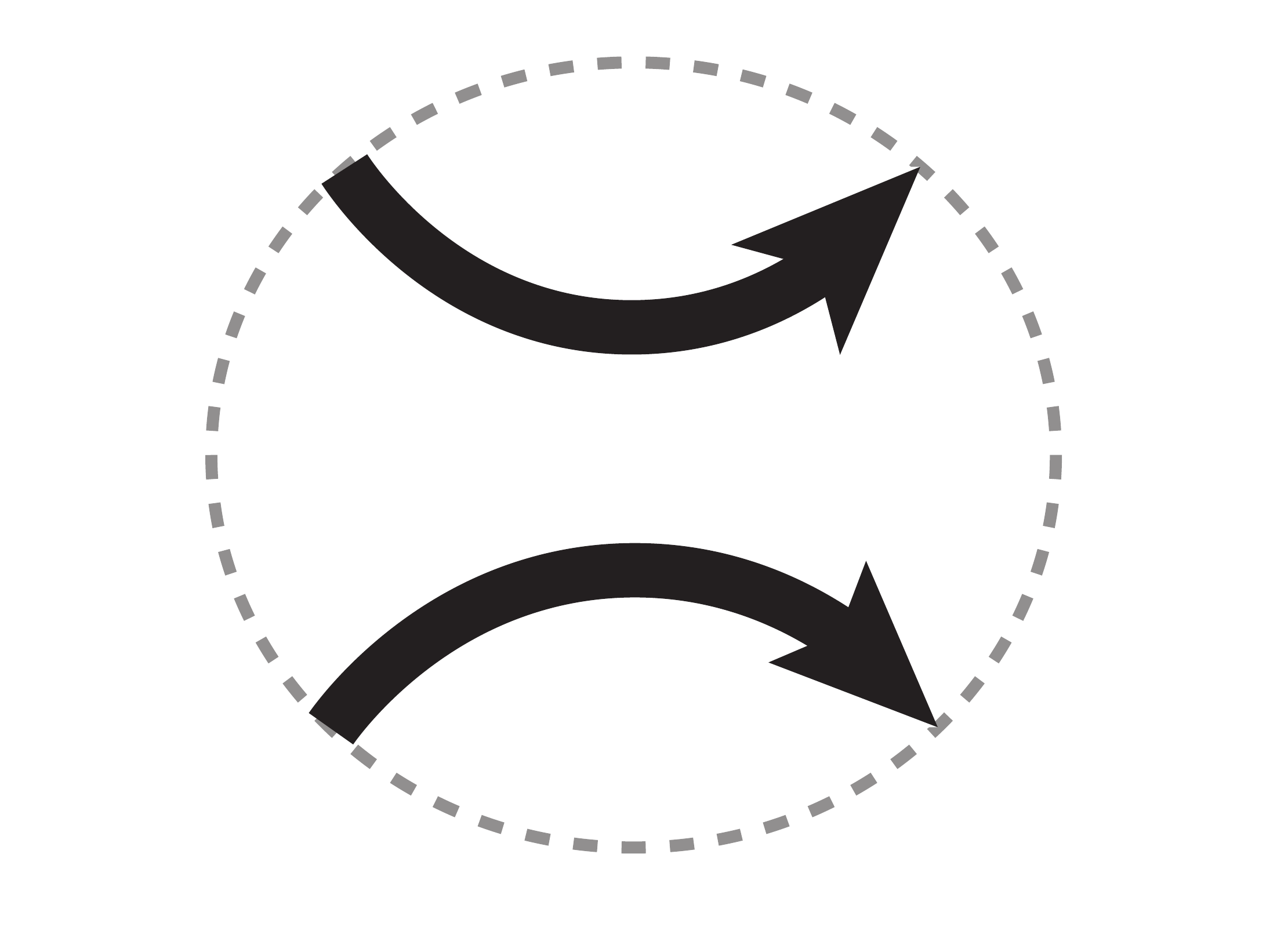}\vspace{-10pt} \[\pmb L_0\]\end{minipage} 
\subcaption{q-homology skein relation}
\end{subfigure}
\begin{subfigure}{.49\textwidth}
\begin{minipage}{1.3in}\includegraphics[width=\textwidth]{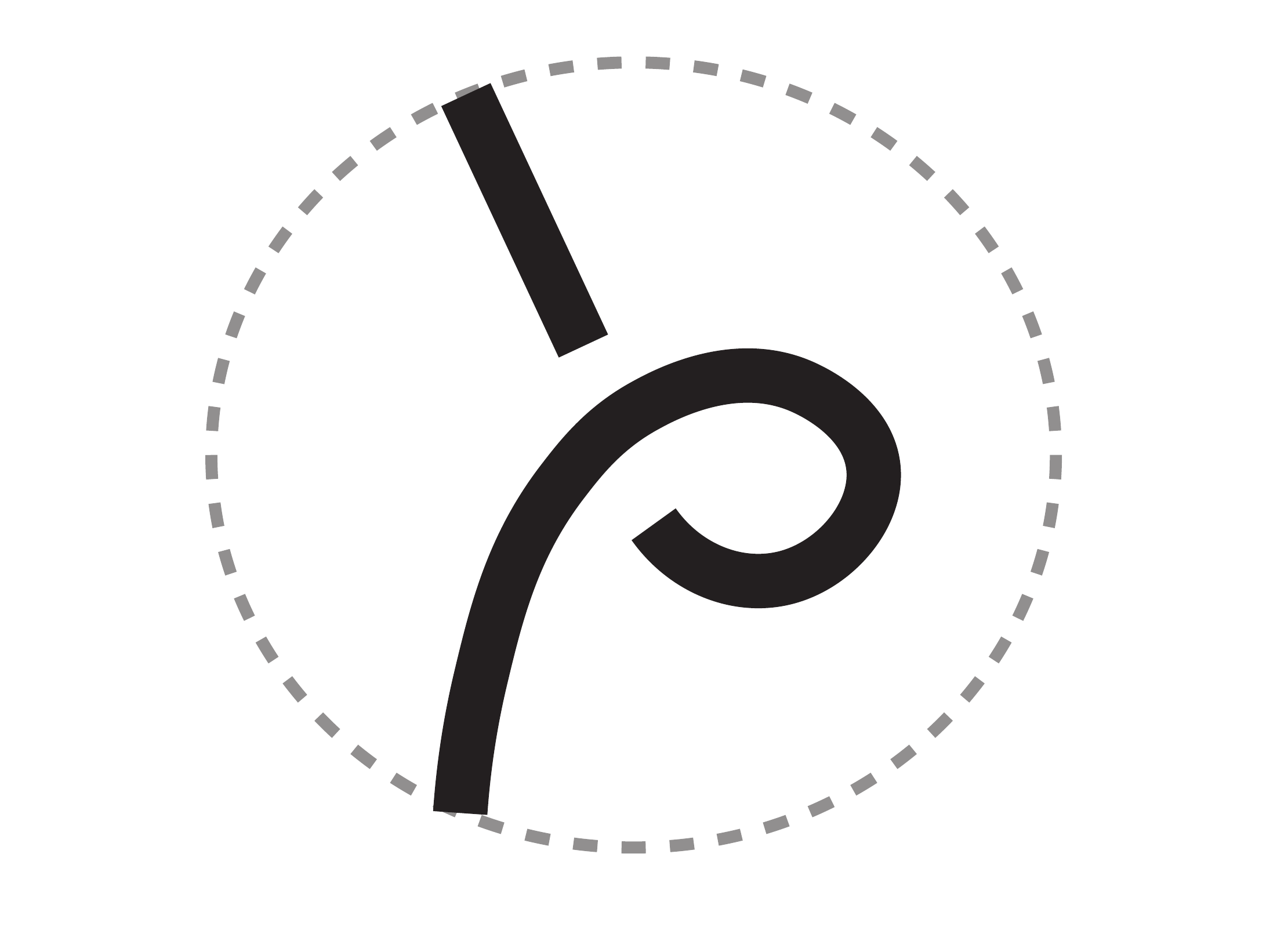}\vspace{-10pt} \[\pmb L^{(1)}\]\end{minipage}
- q \begin{minipage}{1.3in}\includegraphics[width=\textwidth]{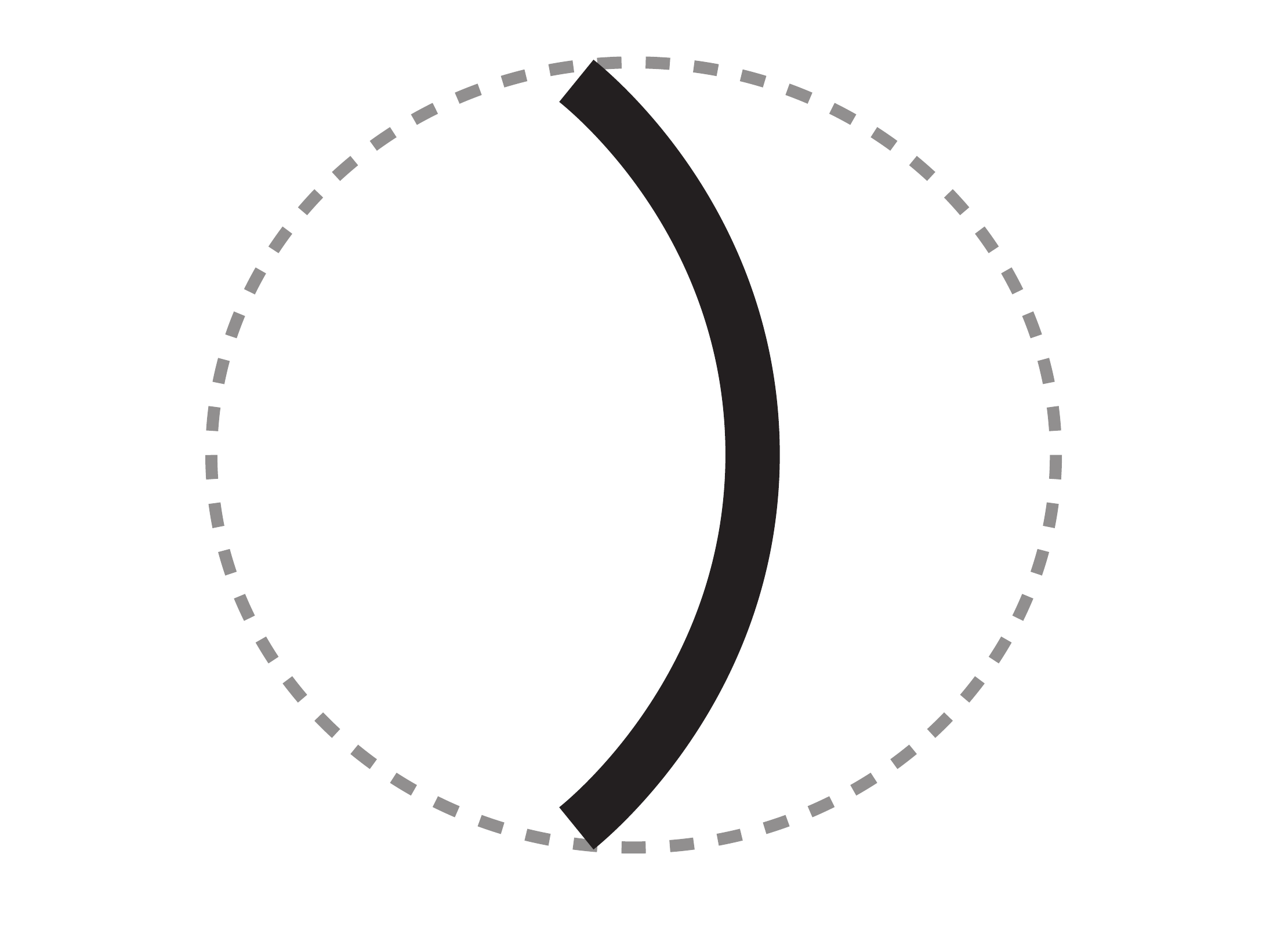}\vspace{-10pt} \[\pmb L\]\end{minipage}
\subcaption{q-homology framing relation}\label{fig:1b}
\end{subfigure}
\caption{\color{white}.}
\label{qhomologyfigure}
\end{figure}

\begin{proposition}

There exists an epimorphism $$\psi : \mathcal{S}_2(M, q) \longrightarrow \bigg(\frac{\mathbb{Z}[q]}{q^{2}- 1}\bigg) H_1(M, \mathbb{Z}_2),$$

which is not canonical and depends on the choice
of spin structure on $M$. As before, compare with Corollary \ref{corframingsm}. 

\end{proposition}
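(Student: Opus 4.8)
The plan is to build $\psi$ by the recipe already used for the maps $\phi$ and $\omega$ of the two preceding propositions. Fix a parallelization of $TM$; as recalled in Subsection~\ref{spinandframing} it assigns to each oriented framed knot $\pmb{K}\subset M$ an element $\phi(\pmb{K})\in\pi_1(SO(3))=\mathbb{Z}_2$ --- compare the moving frame $(\text{tangent},\text{framing},\text{normal})$ along $K$ with the fixed one --- and this element changes by $1$ whenever the framing of $\pmb{K}$ is twisted by one full twist. For an oriented framed link $\pmb{L}=\pmb{K}_1\sqcup\cdots\sqcup\pmb{K}_n$ I would set
$$\widetilde{\psi}(\pmb{L})\;=\;q^{\,\sum_{i=1}^{n}(\phi(\pmb{K}_i)+1)}\,|L|\;\in\;\bigg(\frac{\mathbb{Z}[q^{\pm1}]}{q^2-1}\bigg) H_1(M,\mathbb{Z}_2),$$
where $|L|=\sum_i|K_i|\in H_1(M,\mathbb{Z}_2)$ and the exponent is read modulo $2$, which is legitimate since $q^2=1$ (hence $q^{-1}=q$) in the quotient ring, this ring being identified with the stated target $\big(\mathbb{Z}[q]/(q^2-1)\big)H_1(M,\mathbb{Z}_2)$. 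As $\phi(\pmb{K}_i)$, $n$, and $|L|$ are ambient isotopy invariants, $\widetilde{\psi}$ is a well defined $\mathbb{Z}[q^{\pm1}]$-linear map on $\mathbb{Z}[q^{\pm1}]\mathcal{L}^{\overrightarrow{fr}}$.

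Then I would check that $\widetilde{\psi}$ kills the two families of generators of $S_2^{\overrightarrow{fr}}$. The framing relation is immediate: passing from $\pmb{L}$ to $\pmb{L}^{(1)}$ changes $\phi$ of one component by $1$ and leaves $n$ and $|L|$ fixed, so $\widetilde{\psi}(\pmb{L}^{(1)})=q\,\widetilde{\psi}(\pmb{L})$ and $\widetilde{\psi}(\pmb{L}^{(1)}-q\pmb{L})=0$. For $\pmb{L}_+-q\pmb{L}_0$: the links $\pmb{L}_+$ and $\pmb{L}_0$ coincide outside a $3$-ball, hence are homologous, so $|L_+|=|L_0|$; and the oriented resolution either splits one component of $L_+$ into two (raising $n$ by $1$) or merges two into one (lowering $n$ by $1$), while a local computation of the moving frame inside the ball --- which reduces to the model computation in a chart, the ball being simply connected, and is of the same nature as the check in the proof of the KBSM proposition above --- shows that $\sum_i\phi(\pmb{K}_i)\bmod 2$ is unchanged. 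Thus the exponent of $q$ changes by $\pm1$, so $\widetilde{\psi}(\pmb{L}_0)=q^{\pm1}\widetilde{\psi}(\pmb{L}_+)=q\,\widetilde{\psi}(\pmb{L}_+)$ and $\widetilde{\psi}(\pmb{L}_+-q\pmb{L}_0)=(1-q^2)\widetilde{\psi}(\pmb{L}_+)=0$. Hence $\widetilde{\psi}$ descends to $\psi:\mathcal{S}_2(M,q)\to\big(\mathbb{Z}[q]/(q^2-1)\big)H_1(M,\mathbb{Z}_2)$.

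For surjectivity, represent an arbitrary $h\in H_1(M,\mathbb{Z}_2)$ by an embedded oriented knot $K$ with some framing; then $\psi(\pmb{K})\in\{h,qh\}$ and $\psi(\pmb{K}^{(1)})$ is the other of the two, so $h$ lies in the image of $\psi$ for every $h$. Since the image of $\psi$ is a $\mathbb{Z}[q^{\pm1}]$-submodule of the target and the classes $h$ generate the target, $\psi$ is an epimorphism. Non canonicity is exactly as in Corollary~\ref{corframingsm}: changing the parallelization alters $\phi$ by a fixed class $c\in H^1(M,\mathbb{Z}_2)=\mathrm{Hom}(H_1(M,\mathbb{Z}_2),\mathbb{Z}_2)$, so the resulting map sends $\pmb{L}$ to $q^{c(|L|)}\psi(\pmb{L})$, which differs from $\psi$ whenever $H^1(M,\mathbb{Z}_2)\neq0$.

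The real work is the skein relation $\pmb{L}_+-q\pmb{L}_0$, and inside it the assertion that an oriented resolution leaves $\sum_i\phi(\pmb{K}_i)$ fixed modulo $2$ while changing the component count by one; everything else is formal manipulation using $q^2=1$. This local spin-framing bookkeeping is the $q$-homology analogue of the verification that $\widetilde{\phi}$ vanishes on the Kauffman bracket skein expressions in the proof above.
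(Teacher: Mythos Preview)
The paper gives no proof of this proposition; it only writes ``As before, compare with Corollary \ref{corframingsm}'' and moves on to the summarizing diagram. Your argument is exactly the natural filling-in of that hint: you imitate the template of the KBSM proposition (define a map on $\mathbb{Z}[q^{\pm1}]\mathcal{L}^{\overrightarrow{fr}}$ via the spin invariant, check it annihilates the skein and framing expressions, verify surjectivity), and this is clearly what the authors intend.

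Two small remarks. First, your exponent $\sum_i(\phi(\pmb K_i)+1)$ differs from the knot-level formula in Corollary~\ref{corframingsm} by the extra ``$+1$'' per component; this shift is harmless modulo $q^2-1$ and is exactly what is needed to make the $q$-homology skein relation $\pmb L_+-q\pmb L_0$ vanish, since the oriented resolution changes the component count by one. Second, the assertion that $\sum_i\phi(\pmb K_i)\bmod 2$ is unchanged under an oriented smoothing is the genuine content, as you note. One clean way to justify it (rather than leaving it as ``a local computation''): for a planar diagram with blackboard framing one has $\sum_i\phi(\pmb K_i)\equiv W+n\pmod 2$, where $W$ is the total writhe and $n$ the number of components; passing from $\pmb L_+$ to $\pmb L_0$ changes $W$ by $-1$ and $n$ by $\pm1$, so $W+n$ is preserved mod $2$. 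This is the same bookkeeping that underlies the line $\widetilde\phi(\pmb L_+)=-A^{-3}\widetilde\phi(\pmb L_0)$ in the KBSM proof, and is treated in \cite{ssandspin}.
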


We summarize our propositions with the following diagram:

\tikzcdset{arrow style=tikz,
diagrams={>={Straight Barb[scale=1]}} }

\[
\begin{tikzcd}[row sep=tiny]
\mathbb{Z}[q^{\pm1}]\mathcal{L}^{\overrightarrow{fr}}\arrow[r] & \mathcal{S}_2(M, q) \arrow[r]\arrow[drr, bend left = 40, "\psi", two heads] & \frac{\mathcal{S}_2(M, q)}{q^2 -1} \arrow[rd, "spin"] &  & &\\ 
& & &  \bigg(\frac{\mathbb{Z}[q]}{q^{2}- 1}\bigg) H_1(M, \mathbb{Z}_2) \arrow[dd, "q\to-A^3"]\arrow[rd] & & \\
\mathbb{Z}[q^{\pm1}]\mathcal{L}^{fr} \arrow[r] & \mathcal{S}_0(M, q) \arrow[r]\arrow[rru, bend left = 17, "\omega",two heads] & \frac{\mathcal{S}_0(M, q)}{q^2 -1} \arrow[ru, "spin"] \arrow[d, "q\to-A^3"]  &  & H^1(M, \mathbb{Z}_2) \\[35pt]
\mathbb{Z}[A^{\pm 1}]\mathcal{L}^{fr} \arrow[r] & \mathcal{S}_{2, \infty}(M) \arrow[r] \arrow[rr, bend right = 30, "\phi", two heads] & \frac{\mathcal{S}_{2, \infty}(M)}{A^4 + A^2 + 1} \arrow[r, "spin"] & \bigg(\frac{\mathbb{Z}[A]}{A^4+A^2+1}\bigg)H_1(M, \mathbb{Z}_2) \arrow[ur] &  \\
\end{tikzcd}
\]

\section{Future Directions}

As we have proven in Theorem \ref{mainresultskein}, the presence of a non-separating $S^2$ in $M$ always yields 
torsion in $\mathcal{S}_0(M,q)$. 
More generally, skein modules that have framing relations always have torsion which is obtained using the light bulb trick. Usually, however, skein modules have more torsion than what the framing relations give us. For example, in the $q$-homology skein module, torsion is related to the presence of closed non-separating surfaces \cite{qanalogue}. 
In the Kauffman bracket skein module torsion is obtained using incompressible spheres and tori (see \cite{kirbyproblemlist, ohtsukiproblemlist}). 
This leads us to the following conjecture. \\

\begin{conjecture}\label{torsionconj} \hfill

\begin{enumerate}

 \item If an irreducible, compact oriented $3$-manifold $M$ is non-Haken, then $\mathcal{S}_{2,\infty}(M)$ is torsion free (see \cite{kirbyproblemlist} and \cite{ohtsukiproblemlist}). \\

    \item If a compact oriented $3$-manifold $M$ contains an incompressible $2$-sphere or torus which is non-parallel to the boundary, then $\mathcal{S}_{2,\infty}(M)$ contains torsion (this was conjectured by the fourth author; see \cite{kirbyproblemlist}). 
    
\end{enumerate}

\end{conjecture}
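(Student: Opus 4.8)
The plan is to treat the two parts of Conjecture \ref{torsionconj} separately, since they call for genuinely different techniques: part (2) is a \emph{construction} of torsion and is largely within reach, whereas part (1) is the deep no-torsion direction tied to Witten finiteness and is where the real obstacle lies.

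\emph{Part (2), the model case.} The cleanest situation is a non-separating $2$-sphere $S^2$, which I would handle exactly as in the proof of Theorem \ref{mainresultskein}. Choose a knot $K$ meeting $S^2$ transversely once; then $[K]\cdot[S^2]=1$ forces $|K|\neq 0$ in $H_1(M,\mathbb{Z}_2)$. The light bulb trick gives $\pmb{K}^{(2)}=\pmb{K}$ by ambient isotopy, while the framing relation $\pmb{K}^{(1)}=-A^3\pmb{K}$ yields $\pmb{K}=(-A^3)^2\pmb{K}=A^6\pmb{K}$, so $(A^6-1)\pmb{K}=0$ in $\mathcal{S}_{2,\infty}(M)$. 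Since $A^6-1$ is a nonzero non-unit of $\mathbb{Z}[A^{\pm1}]$, it remains only to see $\pmb{K}\neq 0$, and for this I would apply the epimorphism $\phi$ into spin twisted homology: $\phi(\pmb{K})$ is a unit multiple of $|K|\neq 0$, so $\pmb{K}\neq 0$ and is genuine torsion. Note that $A^6-1=(A^2-1)(A^4+A^2+1)$ lies in the kernel of the coefficient reduction defining $\phi$, which is precisely why this torsion is invisible to $\phi$ yet is detected through the nonvanishing of $\pmb{K}$.

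\emph{Part (2), remaining cases.} A separating incompressible sphere exhibits $M$ as a nontrivial connected sum $M_1\#M_2$; here I would attempt to import the torsion from the prime decomposition, leaning on the known KBSM computations for connected sums (of lens and prism manifolds, say) and aiming at a general statement that a nontrivial prime decomposition forces torsion. For an incompressible torus $T$ non-parallel to the boundary no light bulb trick is available, so the plan is to build an explicit torsion element supported near $T$: produce a small family of parallel curves on a collar $T\times I$ together with a relation, arising from resolving crossings against the framing relation, that furnishes a nontrivial annihilator. This matches every computed example (for instance $T^3$ and Seifert fibered pieces), and promoting it to a statement valid for an arbitrary essential torus is, I expect, the hardest part of the constructive direction.

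\emph{Part (1), no torsion for atoroidal $M$.} This is the deepest statement and the principal obstacle. I would approach it through the identification of $\mathcal{S}_{2,\infty}(M)$ as a quantization of the $SL_2(\mathbb{C})$ character variety: at $A=-1$ the module specializes to the universal character ring, and for an atoroidal (hence geometrically rigid) $M$ one expects the character variety to be well enough behaved that a finite basis of $\mathcal{S}_{2,\infty}(M)$ over $\mathbb{Z}[A^{\pm1}]$ can be exhibited, forcing freeness and therefore torsion-freeness; combining this with finite-dimensionality over $\mathbb{Q}(A)$ (Witten finiteness) would close the argument. The true difficulty is that finite generation over $\mathbb{Q}(A)$ does not by itself rule out $\mathbb{Z}[A^{\pm1}]$-torsion, and there is at present no general method for producing an integral basis. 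Converting atoroidality into such a structural statement about the module is exactly where the conjecture remains open.
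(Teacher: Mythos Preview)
The statement you are addressing is a \emph{conjecture}, not a theorem: the paper places it in Section~5 (``Future Directions'') and offers no proof whatsoever, only attributions to the problem lists \cite{kirbyproblemlist,ohtsukiproblemlist}. There is therefore no ``paper's own proof'' to compare your proposal against.

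That said, your write-up is appropriately framed as a research outline rather than a proof, and you correctly identify which fragments are known and which are open. Your treatment of the non-separating $S^2$ case in part~(2) is exactly the argument the paper alludes to just before Conjecture~\ref{torsionconj} (the $(A^6-1)\pmb{K}=0$ relation via the light bulb trick and the framing relation, with nontriviality detected by the epimorphism $\phi$); this piece is not new and is essentially established in the paper. The remaining cases of part~(2)---separating incompressible spheres and incompressible tori---are genuinely open, and your sketches there (importing torsion through connected sums, building explicit torsion near an essential torus) are plausible strategies but not arguments. Likewise for part~(1): your plan via character varieties and Witten finiteness is a reasonable heuristic, and you correctly isolate the real obstruction, namely that finite-dimensionality over $\mathbb{Q}(A)$ does not preclude $\mathbb{Z}[A^{\pm1}]$-torsion. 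In short, your proposal is an honest status report on an open problem, not a proof, and the paper makes no stronger claim.
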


The following statement was conjectured by Witten (a proof was announced in \cite{wittenresolved}).

\begin{conjecture}[Witten]\label{wittenresolved}

The Kauffman bracket skein module for any closed oriented $3$-manifold over $\mathbb{Q}(A)$, the field of rational functions in the variable $A$, is always finite dimensional. 

\end{conjecture}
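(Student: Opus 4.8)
The statement to be proved is Witten's finiteness conjecture: $\mathcal{S}_{2,\infty}(M;\mathbb{C}(A))$ is finite dimensional for every closed oriented $3$-manifold $M$. The plan is to follow the skein-theoretic / factorization-homology strategy, reducing the closed-manifold statement to a finiteness property of surfaces. First I would fix a Heegaard splitting $M = H_g \cup_{\Sigma_g} \overline{H}_g$ into two genus-$g$ handlebodies glued along a Heegaard surface $\Sigma_g$. Since any framed link in a handlebody can be isotoped into a collar of its boundary, the skein module $\mathcal{S}_{2,\infty}(H_g;\mathbb{C}(A))$ is cyclic over the skein algebra $\mathcal{S}_{2,\infty}(\Sigma_g\times I;\mathbb{C}(A))$, generated by the empty link, and similarly for $\overline{H}_g$; write $I_{H_g}$ and $I_{\overline{H}_g}$ for the corresponding "handlebody ideals". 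The gluing formula for skein modules then yields
\[ \mathcal{S}_{2,\infty}(M;\mathbb{C}(A)) \;\cong\; \mathcal{S}_{2,\infty}(H_g;\mathbb{C}(A)) \otimes_{\mathcal{S}_{2,\infty}(\Sigma_g\times I;\mathbb{C}(A))} \mathcal{S}_{2,\infty}(\overline{H}_g;\mathbb{C}(A)), \]
which, both factors being cyclic, is the quotient of $\mathcal{S}_{2,\infty}(\Sigma_g\times I;\mathbb{C}(A))$ by $I_{H_g} + \varphi_* I_{\overline{H}_g}$, where $\varphi$ is the gluing homeomorphism. Thus finite dimensionality of $\mathcal{S}_{2,\infty}(M)$ is precisely finite dimensionality of this quotient of the Heegaard-surface skein algebra.

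To control that quotient I would pass to the skein-category (factorization-homology) formalism: the assignment $\Sigma \mapsto \mathrm{SkCat}_A(\Sigma)$ is a functor on surfaces whose evaluations recover the skein algebras of thickened surfaces, the skein modules of handlebodies, and $\mathcal{S}_{2,\infty}(M)$ itself via excision/colimit gluing, and which on a surface with boundary is governed by the module category over an \emph{internal skein algebra} built from the quantized coordinate algebra $\mathcal{O}_A(SL_2)$. Over $\mathbb{C}(A)$ two decisive simplifications occur: the braided tensor category $\mathrm{Rep}_A(SL_2)$ is semisimple (abstractly equivalent as an abelian category to $\mathrm{Rep}\,SL_2$), so objects of $\mathrm{SkCat}_A(\Sigma_g)$ and the relevant internal Hom-spaces can be analyzed on an explicit finite set of simple objects; and the classical limit $A \to -1$ identifies $\mathcal{S}_{2,\infty}(\Sigma_g)$ with the coordinate ring of the affine $SL_2(\mathbb{C})$-character variety of $\Sigma_g$, hence with a finitely generated commutative algebra, providing Noetherian control. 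In these terms the relative tensor product above becomes a \emph{quantum Hamiltonian reduction} of the two handlebody modules over the quantum moment map algebra attached to $\Sigma_g$, and the aim is to prove that this reduction is finite dimensional over $\mathbb{C}(A)$.

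This last finiteness is the main obstacle, and I expect essentially all of the work to live there. It cannot be obtained by a naive degeneration argument: at $A = -1$ the skein module of $M$ is, modulo nilpotents, the coordinate ring of the $SL_2(\mathbb{C})$-character variety of $M$, which is infinite dimensional whenever that character variety has positive dimension, and $\mathcal{S}_{2,\infty}(M)$ is not flat over $\mathbb{Z}[A^{\pm1}]$, so upper semicontinuity of fibre dimension bounds the generic fibre only by this (infinite) special one — the wrong direction. Finiteness at generic $A$ must instead be an intrinsically quantum phenomenon: each handlebody ideal alone cuts the Heegaard-surface skein algebra down only to an infinite-dimensional quotient, but the two together cut it down to a finite-dimensional one. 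Making this precise requires (i) the generic semisimplicity of $\mathrm{Rep}_A(SL_2)$ to express the relative tensor product as a finite diagram of internal Hom-spaces, (ii) finite generation of the classical character-variety algebras together with a generic-flatness argument to bound each of those Hom-spaces, and (iii) a dévissage / spectral-sequence assembly of the pieces. Establishing that the resulting quantum Hamiltonian reduction is finite dimensional over $\mathbb{C}(A)$ is exactly the heart of the Gunningham--Jordan--Safronov argument; I am not aware of an elementary route that bypasses it, and the only checks available by hand are in small Heegaard genus (lens spaces, and genus-one splittings more generally), where the quotient above can be computed directly.
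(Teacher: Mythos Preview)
The statement you chose is recorded in the paper as a \emph{conjecture}, not a theorem; the paper offers no proof of it. The surrounding text simply attributes the statement to Witten and notes that a proof has been announced by Gunningham--Jordan--Safronov in the cited reference. There is therefore no ``paper's own proof'' to compare your attempt against.

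As for the proposal itself: it is an accurate high-level outline of the Gunningham--Jordan--Safronov strategy (Heegaard decomposition, passage to skein categories / factorization homology, generic semisimplicity of $\mathrm{Rep}_A(SL_2)$, and finiteness of the quantum Hamiltonian reduction), and you are honest that the decisive step---finite dimensionality of the relative tensor product over $\mathbb{C}(A)$---is exactly the content of their paper and is not supplied here. So what you have written is a roadmap, not a proof. One technical caution worth flagging: the underived gluing isomorphism
\[
\mathcal{S}_{2,\infty}(M)\;\cong\;\mathcal{S}_{2,\infty}(H_g)\ \otimes_{\mathcal{S}_{2,\infty}(\Sigma_g\times I)}\ \mathcal{S}_{2,\infty}(\overline{H}_g)
\]
that you invoke is not known to hold on the nose for arbitrary $3$-manifolds; part of the reason the skein-category / factorization-homology formalism is introduced is precisely to make such gluing statements correct (in general one needs a derived or categorical version). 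So even the reduction step, as written, is looser than it appears.
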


We can combine this conjecture with Conjecture \ref{torsionconj} to obtain the following conjecture. 

\begin{conjecture}

$\mathcal{S}_{2, \infty}(M)$ of a closed, oriented, irreducible, non-Haken $3$-manifold has finite rank. 

\end{conjecture}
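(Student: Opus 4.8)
The plan is to combine the two preceding conjectures in the obvious way, so this is really a ``meta-proof'' that chains together implications rather than a standalone argument. First I would unwind the statement: let $M$ be a compact oriented atoroidal $3$-manifold, so by definition $M$ is irreducible and every incompressible torus in $M$ is boundary-parallel. Irreducibility means $M$ has no non-separating (indeed no incompressible) $S^2$, so the hypothesis of part (1) of Conjecture \ref{torsionconj} is met, and I would invoke it to conclude that $\mathcal{S}_{2,\infty}(M)$ contains no torsion. The point of dividing into ``atoroidal'' rather than just ``irreducible'' is precisely to kill both the sphere obstruction (via irreducibility) and the torus obstruction (via the atoroidal hypothesis) that Conjecture \ref{torsionconj}(2) identifies as the sources of torsion.

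Next I would bring in Witten's finiteness, Conjecture \ref{wittenresolved}: over the field $\mathbb{C}(A)$ the KBSM $\mathcal{S}_{2,\infty}(M;\mathbb{C}(A),A)$ is finite dimensional. The remaining step is the standard base-change bookkeeping: I would show that if $\mathcal{S}_{2,\infty}(M)=\mathcal{S}_{2,\infty}(M;\mathbb{Z}[A^{\pm1}],A)$ is torsion-free and $\mathcal{S}_{2,\infty}(M)\otimes_{\mathbb{Z}[A^{\pm1}]}\mathbb{C}(A)$ is finite dimensional, then $\mathcal{S}_{2,\infty}(M)$ has finite rank as a $\mathbb{Z}[A^{\pm1}]$-module. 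Concretely, pick links $L_1,\dots,L_n$ whose images form a $\mathbb{C}(A)$-basis of the localization; I claim they generate a finite-rank free submodule and that, because there is no torsion, the quotient must also be controlled --- one argues that any further link becomes a $\mathbb{C}(A)$-linear combination of the $L_i$, clears denominators to get a $\mathbb{Z}[A^{\pm1}]$-relation with a nonzero coefficient in front of it, and torsion-freeness then forces that link into the span after possibly enlarging the generating set finitely many times. This is the kind of routine module-theory argument I would not grind through in detail here.

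The honest assessment is that there is no genuine mathematical obstacle internal to this statement: it is a formal consequence of the two conjectures it explicitly cites, plus elementary commutative algebra. The ``hard part'' is entirely outsourced --- it lives in Conjecture \ref{torsionconj}(1) (torsion-freeness of the KBSM for atoroidal manifolds, still open in general) and in the full verification of Conjecture \ref{wittenresolved} (announced in \cite{wittenresolved}). The one place where a little care is actually needed is the base-change step: torsion-freeness over $\mathbb{Z}[A^{\pm1}]$ is not by itself enough to descend finite dimensionality over $\mathbb{C}(A)$ to finite rank over $\mathbb{Z}[A^{\pm1}]$ without knowing the module is reasonably behaved (e.g.\ that it has no infinitely generated torsion-free part); so I would either additionally assume, or cite from the finiteness literature, that $\mathcal{S}_{2,\infty}(M)$ is finitely generated, which together with torsion-freeness makes it a submodule of a free module and the rank computation immediate. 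With those inputs in hand the conclusion that $\mathcal{S}_{2,\infty}(M)$ has finite rank follows.
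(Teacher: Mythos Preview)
This statement is a \emph{conjecture} in the paper, not a theorem, and the paper offers no proof beyond the single sentence ``We can combine this conjecture with Conjecture~\ref{torsionconj} to obtain the following conjecture.'' Your proposal is precisely an elaboration of that one sentence: you invoke part~(1) of Conjecture~\ref{torsionconj} for torsion-freeness, Conjecture~\ref{wittenresolved} for finiteness after base change to $\mathbb{C}(A)$, and then discuss the module-theoretic glue needed to descend to a finite-rank statement over $\mathbb{Z}[A^{\pm1}]$. That is exactly the paper's intended reasoning, just spelled out.

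Your caveats in fact go beyond what the paper says, and they are apt. First, if ``finite rank'' means only $\dim_{\mathbb{Q}(A)}\bigl(\mathcal{S}_{2,\infty}(M)\otimes_{\mathbb{Z}[A^{\pm1}]}\mathbb{Q}(A)\bigr)<\infty$, then Witten's conjecture alone already gives it and torsion-freeness is irrelevant; the paper's word ``combine'' suggests something stronger is meant (e.g.\ free of finite rank, or at least finitely generated), and you correctly point out that torsion-freeness plus a finite-dimensional localization does \emph{not} by itself force finite generation without further input. Second, Conjecture~\ref{wittenresolved} as stated (and as announced in \cite{wittenresolved}) is for \emph{closed} oriented $3$-manifolds, whereas atoroidal manifolds may well have boundary, so the combination is not literally clean. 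None of this is a defect in your write-up: these are genuine loose ends in the paper's own formulation, and your meta-proof correctly locates where the real content is outsourced.
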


\section{Acknowledgements}

The fourth author was partially supported by Simons Collaboration Grant-637794 and the CCAS Dean’s Research Chair award.

\end{document}